\documentclass[3p,times]{elsarticle}

%% The `ecrc' package must be called to make the CRC functionality available
\usepackage{ecrc}
\pdfoutput=1
%% The ecrc package defines commands needed for running heads and logos.
%% For running heads, you can set the journal name, the volume, the starting page and the authors

%% set the volume if you know. Otherwise `00'
\volume{00}

%% set the starting page if not 1
\firstpage{1}

%% Give the name of the journal
\journalname{Mathematics and Computers in Simulation}

%% Give the author list to appear in the running head
%% Example \runauth{C.V. Radhakrishnan et al.}
\runauth{}

%% The choice of journal logo is determined by the \jid and \jnltitlelogo commands.
%% A user-supplied logo with the name <\jid>logo.pdf will be inserted if present.
%% e.g. if \jid{yspmi} the system will look for a file yspmilogo.pdf
%% Otherwise the content of \jnltitlelogo will be set between horizontal lines as a default logo

%% Give the abbreviation of the Journal.  Contact the journal editorial office if in any doubt
\jid{procs}

%% Give a short journal name for the dummy logo (if needed)
\jnltitlelogo{Procedia Computer Science}

%% Provide the copyright line to appear in the abstract
%% Usage:
%   \CopyrightLine[<text-before-year>]{<year>}{<restt-of-the-copyright-text>}
%   \CopyrightLine[Crown copyright]{2011}{Published by Elsevier Ltd.}
%   \CopyrightLine{2011}{Elsevier Ltd. All rights reserved}
\CopyrightLine{2011}{Published by Elsevier Ltd.}

%% Hereafter the template follows `elsarticle'.
%% For more details see the existing template files elsarticle-template-harv.tex and elsarticle-template-num.tex.

%% Elsevier CRC generally uses a numbered reference style
%% For this, the conventions of elsarticle-template-num.tex should be followed (included below)
%% If using BibTeX, use the style file elsarticle-num.bst

%% End of ecrc-specific commands
%%%%%%%%%%%%%%%%%%%%%%%%%%%%%%%%%%%%%%%%%%%%%%%%%%%%%%%%%%%%%%%%%%%%%%%%%%

%% The amssymb package provides various useful mathematical symbols
\usepackage{amssymb}
\usepackage{amsthm}
%% The amsthm package provides extended theorem environments
%% \usepackage{amsthm}

%% The lineno packages adds line numbers. Start line numbering with
%% \begin{linenumbers}, end it with \end{linenumbers}. Or switch it on
%% for the whole article with \linenumbers after \end{frontmatter}.
%% \usepackage{lineno}

%% natbib.sty is loaded by default. However, natbib options can be
%% provided with \biboptions{...} command. Following options are
%% valid:

%%   round  -  round parentheses are used (default)
%%   square -  square brackets are used   [option]
%%   curly  -  curly braces are used      {option}
%%   angle  -  angle brackets are used    <option>
%%   semicolon  -  multiple citations separated by semi-colon
%%   colon  - same as semicolon, an earlier confusion
%%   comma  -  separated by comma
%%   numbers-  selects numerical citations
%%   super  -  numerical citations as superscripts
%%   sort   -  sorts multiple citations according to order in ref. list
%%   sort&compress   -  like sort, but also compresses numerical citations
%%   compress - compresses without sorting
%%
%% \biboptions{comma,round}

% \biboptions{}

\newcommand{\E}{\mathbb{E}}

\newcommand{\bdtau}{\boldsymbol{\tau}}
\newcommand{\bdv}{\boldsymbol{v}}

\newcommand{\R}{\mathbb{R}}

\newcommand{\abs}[1]{\left| #1 \right|}
\usepackage{leftindex}

\newcommand{\bfu}{{\mathbf{u}}}

\newcommand{\bfx}{{\mathbf{x}}}

\newcommand{\Indc}[1]{{\mathbf{1}_{\left\{{#1}\right\}}}}

\newtheorem{theorem}{Theorem}

\newtheorem{corollary}[theorem]{Corollary}
\newtheorem{proposition}[theorem]{Proposition}

\newtheorem{assumption}[theorem]{Assumption}
\newtheorem{remark}[theorem]{Remark}

\usepackage{algorithm}
\usepackage{algorithmicx}
\usepackage{algpseudocode}
% if you have landscape tables
\usepackage[figuresright]{rotating}
\usepackage{url}
% put your own definitions here:
%   \newcommand{\cZ}{\cal{Z}}
%   \newtheorem{def}{Definition}[section]
%   ...

% add words to TeX's hyphenation exception list
%\hyphenation{author another created financial paper re-commend-ed Post-Script}

% declarations for front matter

\begin{document}

\begin{frontmatter}

%% Title, authors and addresses

%% use the tnoteref command within \title for footnotes;
%% use the tnotetext command for the associated footnote;
%% use the fnref command within \author or \address for footnotes;
%% use the fntext command for the associated footnote;
%% use the corref command within \author for corresponding author footnotes;
%% use the cortext command for the associated footnote;
%% use the ead command for the email address,
%% and the form \ead[url] for the home page:
%%
%% \title{Title\tnoteref{label1}}
%% \tnotetext[label1]{}
%% \author{Name\corref{cor1}\fnref{label2}}
%% \ead{email address}
%% \ead[url]{home page}
%% \fntext[label2]{}
%% \cortext[cor1]{}
%% \address{Address\fnref{label3}}
%% \fntext[label3]{}

\dochead{}
%% Use \dochead if there is an article header, e.g. \dochead{Short communication}
%% \dochead can also be used to include a conference title, if directed by the editors
%% e.g. \dochead{17th International Conference on Dynamical Processes in Excited States of Solids}

\title{Flow Matching Transport for Quasi-Monte Carlo Integration}

%% use optional labels to link authors explicitly to addresses:
%% \author[label1,label2]{<author name>}
%% \address[label1]{<address>}
%% \address[label2]{<address>}
\author[cjl]{Zhijun Zeng}
\author[cjl]{Jianlong Chen\corref{cor1}}
\address[cjl]{Department of Mathematical Sciences, Tsinghua University, Beijing 100084, China}
\cortext[cor1]{Corresponding author.}
\ead{E-mail address: zengzj22@mails.tsinghua.edu.cn (Z. Zeng).chen-jl22@mails.tsinghua.edu.cn (J. Chen).}
\begin{abstract}
High-dimensional integration with respect to complex target measures remains a fundamental challenge in computational science. While Flow Matching (FM) offers a powerful paradigm for constructing continuous-time transport maps, its deployment in high-precision integration is severely limited by the discretization bias inherent to numerical ODE solvers and the lack of rigorous convergence guarantees when coupled with Quasi-Monte Carlo (QMC) methods. This paper addresses these critical gaps by proposing Flow Matching Importance Sampling Quasi-Monte Carlo (FM-ISQMC), a framework designed to transform biased generative flows into unbiased, high-order integration schemes. Methodologically, we construct a transport map by composing a logistic base transformation with an Euler-discretized neural ODE field and employ importance sampling to correct for residual transport errors. Our central contribution is twofold. First, we establish a general convergence analysis for QMC importance sampling with arbitrary transport maps, identifying sufficient growth conditions for the $\mathcal{O}(N^{-1+\varepsilon})$ root-mean-square error rate. Second, we rigorously prove that the specific transport architecture of Flow Matching satisfies these conditions. Consequently, we establish a  $\mathcal{O}(N^{-1+\varepsilon})$ root-mean-square error for the unbiased FM-ISQMC estimator, extending classical QMC theory to the realm of generative models. Numerical experiments validate that FM-ISQMC consistently breaks through the error floor observed in direct transport methods, delivering superior precision. This work thus bridges the divide between deep generative modeling and numerical integration.
\end{abstract}

\begin{keyword}
Quasi-monte carlo; Flow matching; Generative model; Transport map.
%% PACS codes here, in the form: \PACS code \sep code

%% MSC codes here, in the form: \MSC code \sep code
%% or \MSC[2008] code \sep code (2000 is the default)

\end{keyword}

\end{frontmatter}

%%
%% Start line numbering here if you want
%%
% \linenumbers

%% main text
\section{Introduction}

High-dimensional numerical integration is a fundamental challenge in many fields of science and engineering, including Bayesian statistics, computational physics, and financial mathematics. The goal is often to compute the expectation of a function $f$ with respect to a complex, high-dimensional probability distribution $\pi$, that is, $\E_{X\sim\pi}[f(X)]$. The standard Monte Carlo (MC) method is robust and largely dimension-independent, but its convergence rate is limited to $O(N^{-1/2})$, where $N$ denotes the number of samples. To mitigate this fundamental limitation, Quasi-Monte Carlo (QMC) and randomized QMC (RQMC) methods have been introduced. Under sufficient regularity, these variants can achieve substantially faster convergence, often approaching $O(N^{-1})$ \cite{Niederreiter1992, owenscrambled}.

However, the efficiency of QMC methods is strongly influenced by the smoothness of the integrand and the geometry of the domain. When the target distribution $\pi$ is highly non-uniform or multimodal, applying QMC directly (e.g., via a simple inverse transform of a uniform measure) can be ineffective or theoretically invalid if the resulting integrand becomes singular or highly oscillatory. A powerful strategy to address this is \textit{measure transport} or \textit{transport maps} \cite{mosk:cafl:1996}. The idea is to construct a diffeomorphism $\bdtau: (0,1)^d \to \mathbb{R}^d$ that pushes forward a simple reference measure (e.g., the uniform distribution $\mathcal{U}(0,1)^d$) to the complex target $\pi$. If such a map is available, the integral can be transformed into an expectation over the hypercube, where QMC points (such as Sobol' sequences or lattice rules) can be effectively utilized.

In recent years, deep generative models have become a leading paradigm for learning transport maps from data or unnormalized densities. Normalizing flows (NFs) \cite{song2020score, liu2022flow} parameterize the map as a composition of invertible layers and have shown success in importance sampling and variational inference. More recently, Continuous Normalizing Flows (CNFs) \cite{liu2022flow} modeled by Ordinary Differential Equations (ODEs) have gained attention. Among these, \textit{Flow Matching} (FM) \cite{liu2022flow} has established itself as a state-of-the-art framework. Unlike traditional diffusion models or maximum likelihood training of CNFs, Flow Matching directly regresses a vector field that generates a probability path between the source and target distributions. This simulation-free training objective leads to more stable training and straighter transport paths, making FM appealing for the design of efficient samplers.

Despite the empirical success of flow-based samplers, the theoretical analysis of their integration with QMC has only recently begun to attract attention. For instance, Andral \cite{andral2024combining} empirically demonstrated that combining Normalizing Flows with RQMC yields variance reduction. On the theoretical front, Liu \cite{liu2024transport} proposed a Transport QMC framework with triangular maps, while Du and He \cite{du2025lp} established $L_p$-error rates for RQMC-SNIS with unbounded integrands. However, a specific analysis linking the architectural properties of Flow Matching (continuous-time flows) to QMC boundary growth conditions remains limited. The central issue is that QMC convergence theory requires strict regularity of the integrand. Specifically, for the transformed integrand $f(\bdtau(\mathbf{u}))$ to benefit from the $O(N^{-1+\epsilon})$ convergence rate of RQMC, it need to satisfy \textit{Owen's boundary growth condition}  \cite{Owen2006}. This condition restricts how fast the mixed partial derivatives of the function can grow as the input $\mathbf{u}$ approaches the boundary of the unit hypercube. While generative models optimize for global distribution matching (e.g., minimizing Kullback-Leibler divergence), they do not explicitly control the pointwise derivative growth at the boundaries. Furthermore, recent theoretical results suggest that the optimal velocity fields in FM can exhibit singularities at the boundaries \cite{gao2024convergence}, potentially violating the smoothness assumptions required for QMC.

In this paper, we propose a rigorous framework for Flow Matching Quasi-Monte Carlo (FM-QMC) and provide a comprehensive theoretical analysis of its convergence properties. We focus on the structural properties of transport maps generated by Flow Matching with a specific architectural choice: a logistic base map composed with an ODE flow driven by a bounded vector field.

Our main contributions are summarized as follows. First, we establish a general convergence analysis for the Importance Sampling QMC (ISQMC) estimator using arbitrary transport maps. We formulate a set of sufficient conditions for a general diffeomorphism to preserve the optimal QMC convergence rate, distinguishing between \textit{Value Growth Conditions} (controlling how fast points are mapped to the tails of the target distribution) and \textit{Derivative Growth Conditions} (controlling the explosion of the Jacobian and higher-order derivatives). Second, we prove that the transport maps constructed via Flow Matching satisfy these rigorous growth conditions. Specifically, we show that if the learned vector field and its derivatives are uniformly bounded (a condition satisfied by standard neural networks with bounded activation functions), and the ODE is solved via a fixed-step Euler method, then the resulting map $\bdtau$ behaves sufficiently well near the boundaries. To our knowledge, this is one of the first results explicitly linking the architectural properties of neural ODEs to the boundary growth conditions required for QMC theory. Third, based on the growth analysis, we establish that the FM-QMC estimator achieves a Root Mean Square Error (RMSE) of $O(N^{-1+\epsilon})$ for any $\epsilon > 0$, provided the target function $f$ has polynomial growth. This provides a solid theoretical foundation for using Flow Matching in high-precision integration tasks. Finally, we validate our theoretical findings through numerical experiments on various high-dimensional benchmarks, observing that FM-QMC consistently outperforms standard MC methods, exhibiting convergence rates consistent with our theoretical predictions.

The remainder of this paper is organized as follows. Section 2 reviews the background on Randomized Quasi-Monte Carlo integration and the Flow Matching framework. Section 3 presents the general theory of boundary growth conditions for transport maps. Section 4 constructs the FM transport map and provides the theoretical proofs verifying that it satisfies the required growth conditions (Theorems \ref{thm:value_growth} and \ref{thm:derivative_growth}). Section 5 presents numerical experiments demonstrating the efficiency and convergence rates of our method. Finally, Section 6 concludes the paper and discusses future directions.

\section{Background}\label{sec:background}
We introduce the background on quasi-Monte Carlo (QMC), randomized quasi-Monte Carlo (RQMC) and Flow Matching

\subsection{Quasi‐Monte Carlo and Randomized Quasi‐Monte Carlo}

Quasi‐Monte Carlo (QMC) integration estimates an integral of the form
\[
  \mu = \int_{[0,1]^d} f(\mathbf{x}) \,\mathrm{d}\mathbf{x}
\]
by replacing independent random sampling with a deterministic, low‐discrepancy point set \(\{\mathbf{u}_i\}_{i=1}^n\subset[0,1]^d\).  The QMC estimator is
\[
  \widehat\mu_n \;=\;\frac{1}{n}\sum_{i=1}^n f(\mathbf{u}_i)\,.
\]
The celebrated Koksma–Hlawka inequality \cite{koks:1942,hlaw:1961} shows that
\begin{equation}\label{equ: kh inequality}
      \bigl|\widehat\mu_n - \mu\bigr|
  \;\le\;
  V_{\mathrm{HK}}(f)\;\;D^{*}\bigl(\mathbf{u}_1,\dots,\mathbf{u}_n\bigr),
\end{equation}
where \(V_{\mathrm{HK}}(f)\) is the Hardy–Krause variation of \(f\), and the star discrepancy is
\[
  D^{*}\bigl(\mathbf{u}_1,\dots,\mathbf{u}_n\bigr)
  \;=\;
  \sup_{\mathbf{a}\in[0,1]^d}\,
  \Biggl|\frac{1}{n}\sum_{i=1}^n\mathbf{1}\{\mathbf{u}_i\in[0,\mathbf{a})\}
  \;-\;\prod_{j=1}^d a_j\Biggr|.
\]
A point set with a star discrepancy of $\mathcal{O}(n^{-1}(\log n)^{d})$ is referred to as a low-discrepancy point set \cite{nied:1992}. For the low-discrepancy point set, one can obtain an integration error of order \(O(n^{-1+\varepsilon})\) for every \(\varepsilon>0\), when \(V_{\mathrm{HK}}(f)<\infty\).

In practice, \(V_{\mathrm{HK}}(f)\) is often infinite, especially when \(f\) is unbounded or has singularities.  Randomized QMC (RQMC) applies a stochastic perturbation to a low‐discrepancy point set \(\{\mathbf{u}_i\}_{i=1}^n\subset[0,1]^d\) so that the randomized points \(\{\mathbf{u}_i'\}\) are each marginally \(\mathrm{Uniform}([0,1]^d)\) while retaining low discrepancy almost surely.  The RQMC estimator
\[
  \hat\mu'_n \;=\;\frac{1}{n}\sum_{i=1}^n f(\mathbf{u}_i')
\]
is unbiased (i.e. \(\mathbb{E}[\hat\mu'_n]=\mu\)), and its variance can be estimated from \(R\) independent replicates \(\{\hat\mu_n^{(r)}\}_{r=1}^R\) by
\[
  \widehat{\mathrm{Var}}\bigl(\hat\mu'_n\bigr)
  = \frac{1}{R-1}\sum_{r=1}^R\bigl(\hat\mu_n^{(r)} - \bar\mu_n\bigr)^2,
  \quad
  \bar\mu_n = \frac{1}{R}\sum_{r=1}^R \hat\mu_n^{(r)}.
\]
Common randomization schemes include:
\begin{enumerate}
\item \emph{Random shift modulo 1.}  Draw \(\boldsymbol{\Delta}\sim\mathrm{Uniform}([0,1]^d)\) and set
  \[
    \mathbf{u}_i' = (\mathbf{u}_i + \boldsymbol{\Delta}) \bmod 1.
  \]
  Under suitable smoothness and periodicity of \(f\),
  \(\mathrm{Var}[\hat\mu'_n] = O\bigl(n^{-2+\varepsilon}\bigr)\)
  for any \(\varepsilon>0\).
\item \emph{Digital shift.}  For a base-\(b\) digital net, draw
  \(\boldsymbol{\Delta}\sim\mathrm{Uniform}(\{0,\dots,b-1\}^d)\) and define
  \[
    \mathbf{u}_i' = \mathbf{u}_i \oplus \boldsymbol{\Delta},
  \]
  where \(\oplus\) denotes digit-wise addition modulo \(b\).  This yields
  \(\mathrm{Var}[\hat\mu'_n] = o\bigl(n^{-1}\bigr)\)
  whenever \(\mathbb{E}[f(\mathbf{u})^2]<\infty\).
\item \emph{Owen’s scrambling.}  A nested uniform scramble of a \((t,m,s)\)-net produces
  \[
    \mathrm{Var}[\hat\mu'_n] = O\bigl(n^{-3+\varepsilon}\bigr)
    \quad\text{for any }\varepsilon>0,
  \]
  provided \(f\) has square‐integrable mixed first derivatives \cite{smoovar,localanti}.
\end{enumerate}
We refer to \cite{dick:pill:2010,dick:kuo:sloa:2013} for a comprehensive introduction of QMC and \cite{lecu:lemi:2002} for a review of RQMC.

\subsection{Growth condition}\label{subsec:GrowthCondition}
We clarify the notation used for partial derivatives. Let $1\!:\!d = \{1, \dots, d\}$. For a set  $v \subseteq \{1, \dots, d\}$, the symbol $\partial^v$ denotes the mixed partial derivative with respect to the variables indexed by $v$, defined as $\partial^v = \prod_{j \in v} \frac{\partial}{\partial u_j}$. For a multi-index $\lambda = (\lambda_1, \dots, \lambda_d) \in \mathbb{N}_0^d$, $\partial^\lambda$ denotes the derivative operator $\frac{\partial^{|\lambda|}}{\partial x_1^{\lambda_1} \dots \partial x_d^{\lambda_d}}$, where $|\lambda| = \sum_{i=1}^d \lambda_i$. Then we review the boundary growth condition for an integrand $h:[0,1]^d\to\R$, as proposed by \cite{Owen2006}.

\begin{assumption}[Boundary growth condition]\label{assump: growth}
For arbitrarily small $B>0$, there exists a constant $C>0$ such that
\begin{align}
    \label{equ: growth}
    |\partial^v h(\bfu)|\leq C \prod_{j=1}^d [\min(u_j,1-u_j)]^{-B-\Indc{j\in v}}
\end{align}
for any $v\subseteq1{:}d$ and $\bfu\in(0,1)^d$.
\end{assumption}
Assumption~\ref{assump: growth} limits the growth rate of integrand and its mixed first-order partial derivatives near the boundaries. As a coordinate $u_j$ approaches 0 or 1, the derivative $\partial^v h(\bfu)$ is bounded by the inverse distance to the boundary raised to a certain power. 

The following theorem is adapted from Theorem 5.7 of \cite{Owen2006}.
\begin{theorem}[Adapted from Theorem 5.7 of \cite{Owen2006}]\label{thm:owengrowth}
    If the integrand $h$ satisfies the boundary growth condition~\ref{assump: growth}, then the scrambled net estimator $\hat\mu_n$ achieves a root mean square error (RMSE) of order $O(n^{-1+\epsilon})$ for arbitrarily small $\epsilon>0$.
\end{theorem}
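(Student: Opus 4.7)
The plan is to adapt the classical truncation argument used by Owen for unbounded integrands. Since $h$ may blow up near the boundary of $[0,1]^d$, the Koksma--Hlawka bound cannot be applied directly; I would instead split the cube into a shrinking interior region on which Assumption~\ref{assump: growth} controls everything, and an outer boundary slab whose measure is small enough to absorb separately.

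First, I would introduce the interior box $K_n = \prod_{j=1}^{d}[c_n,\,1-c_n]$ with $c_n \to 0$ polynomially in $n$, and construct a surrogate integrand $\tilde h_n$ that agrees with $h$ on $K_n$ and is extended to a bounded, piecewise smooth function on $[0,1]^d\setminus K_n$ of finite Hardy--Krause variation (following Owen's low-variation extension). A standard scrambled net variance bound of the form $\operatorname{Var}(\hat\mu_n[\tilde h_n]) = O(n^{-3+\varepsilon} V_{\mathrm{HK}}(\tilde h_n)^2)$ then applies, so the first analytic task is to estimate $V_{\mathrm{HK}}(\tilde h_n)$ by integrating the bound (\ref{equ: growth}) for $|\partial^v h|$ over $K_n$; this yields an upper bound that is a negative power of $c_n$ depending on $B$, $d$, and $|v|$.

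Second, I would decompose $\hat\mu_n - \mu$ as the sum of a stochastic boundary discrepancy $\hat\mu_n - \hat\mu_n[\tilde h_n]$, the centered scrambled-net error $\hat\mu_n[\tilde h_n] - \mathbb{E}\hat\mu_n[\tilde h_n]$, and the deterministic bias $\mathbb{E}\hat\mu_n[\tilde h_n] - \mu$. The middle term is controlled by the variance bound just established. The bias is controlled by $\int_{[0,1]^d\setminus K_n}|h-\tilde h_n|\,\mathrmd\bfu$, which is small because the slab has measure $O(d c_n)$ and $h$ has integrable singularities there by the $v=\emptyset$ case of (\ref{equ: growth}). The stochastic boundary term exploits that each point of a scrambled net is marginally uniform, so the expected number of points in the slab is $O(n d c_n)$, and an $L^p$ moment bound on $h$ near the boundary (again from the $v=\emptyset$ case) combined with H\"older's inequality gives the required control.

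The decisive step is the balancing of $c_n$: the interior variance grows like $c_n^{-\alpha(B,d)}$ while the boundary contributions vanish like $c_n^{\beta(B)}$, so $c_n$ must be chosen so that both error sources are simultaneously $O(n^{-1+\varepsilon})$. Because Assumption~\ref{assump: growth} may be invoked with arbitrarily small $B > 0$, this trade-off can be tuned to deliver any exponent $-1+\varepsilon$, matching the theorem. The main obstacle is the bookkeeping: each subset $v \subseteq 1{:}d$ contributes a separate factor to $V_{\mathrm{HK}}(\tilde h_n)$ through (\ref{equ: growth}), and these factors must be combined carefully to avoid a suboptimal exponent, while simultaneously verifying that Owen's extension across $\partial K_n$ does not inject its own uncontrolled variation. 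Once this accounting is handled, the announced RMSE rate follows by taking square roots and absorbing logarithmic factors into $\varepsilon$.
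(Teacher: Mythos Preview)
The paper does not prove this statement; Theorem~\ref{thm:owengrowth} is stated without proof and attributed directly to Owen (Theorem~5.7 of \cite{Owen2006}), serving purely as a cited background result in Section~\ref{subsec:GrowthCondition}. So there is nothing in the paper to compare your proposal against.

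That said, your sketch is a faithful outline of Owen's original argument: the truncation to an interior box $K_n$, the low-variation extension $\tilde h_n$, the three-way decomposition of the error into the scrambled-net variance on $\tilde h_n$, the deterministic bias from the boundary slab, and the stochastic contribution from points falling in the slab, followed by the balancing of $c_n$ against $n$ using the arbitrarily small exponent $B$ in Assumption~\ref{assump: growth}. If you actually intend to include a self-contained proof rather than just citing \cite{Owen2006}, the main places where your sketch would need sharpening are (i) the precise scrambled-net variance bound you invoke---Owen's result for unbounded integrands uses the gain coefficients of the net rather than the clean $O(n^{-3+\varepsilon}V_{\mathrm{HK}}^2)$ rate, which only holds under stronger smoothness---and (ii) the control of the stochastic boundary term, where marginal uniformity alone is not quite enough and one must also use the equidistribution properties of the net to bound the number of points in the slab almost surely, not just in expectation. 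But for the purposes of this paper, simply citing \cite{Owen2006} as the authors do is entirely appropriate.
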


The analysis of the growth of derivatives for composite functions relies on the multivariate Fa\`a di Bruno formula.
\begin{theorem}[Multivariate Fa\`a di Bruno Formula \cite{cons:savi:1996}]
\label{thm:faa_di_bruno}
Let $f: \mathbb{R}^d \to \mathbb{R}$ and $\bdtau: \mathbb{R}^d \to \mathbb{R}^d$ be smooth functions. For $h = f \circ \bdtau$ and any non-empty set $v \subseteq \{1, \dots, d\}$, we have
$$
\partial^v h(\bfu) = \sum_{\lambda \in \mathbb{N}_0^d : 1 \le |\lambda| \le |v|} (\partial^\lambda f)(\bdtau(\bfu)) \sum_{s=1}^{|v|} \sum_{(\mathbf{k}, \boldsymbol{\ell}) \in \mathcal{A}(\lambda, s)} \prod_{r=1}^s \partial^{\ell_r} \bdtau_{k_r}(\bfu),
$$
where
$$
\mathcal{A}(\lambda, s) = \left\{ (\mathbf{k}, \boldsymbol{\ell}) = (k_1, \dots, k_s, \ell_1, \dots, \ell_s) : 
 k_r \in \{1, \dots, d\}, \ell_r \subseteq \{1, \dots, d\}, 
 |\{j \in \{1, \dots, s\} : k_j = i\}| = \lambda_i, \bigsqcup_{r=1}^s \ell_r = v 
\right\}.
$$
Here, the notation $\bigsqcup_{r=1}^s \ell_r = v$ indicates that the sets $\ell_1, \dots, \ell_s$ form a partition of $v$, that is, the sets $\ell_1, \dots, \ell_s$ are pairwise disjoint and their union is $v$.

\end{theorem}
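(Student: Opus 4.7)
I would prove the formula by induction on $n := |v|$. For the base case $|v| = 1$ with $v = \{j\}$, the claim collapses to the ordinary multivariate chain rule $\partial_j (f \circ \bdtau)(\bfu) = \sum_{k=1}^d (\partial_k f)(\bdtau(\bfu))\, \partial_j \bdtau_k(\bfu)$, which matches the right-hand side with $\lambda = \bfe_k$, $s = 1$, $k_1 = k$, and $\ell_1 = \{j\}$; these indices exhaust $\mathcal{A}(\bfe_k, 1)$ as $k$ ranges over $\{1,\ldots,d\}$.

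For the inductive step, fix $m \notin v$, set $v' = v \cup \{m\}$, and apply $\partial_m$ to both sides of the hypothesis for $v$. By the Leibniz rule, each summand $(\partial^\lambda f)(\bdtau(\bfu)) \prod_{r=1}^s \partial^{\ell_r} \bdtau_{k_r}(\bfu)$ generates two families of new terms. First, $\partial_m$ may strike the outer composition $(\partial^\lambda f)(\bdtau(\bfu))$; a further application of the chain rule produces $\sum_{k=1}^d (\partial^{\lambda + \bfe_k} f)(\bdtau(\bfu))\, \partial_m \bdtau_k(\bfu)$, which corresponds to appending a new pair $(k, \{m\})$ to the tuple and promoting the multi-index to $\lambda + \bfe_k$ while incrementing $s$ by one. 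Second, $\partial_m$ may strike one of the factors $\partial^{\ell_r} \bdtau_{k_r}$, producing $\partial^{\ell_r \cup \{m\}} \bdtau_{k_r}$, which corresponds to enlarging the $r$-th block by adjoining $m$ and leaves $\lambda$ and $s$ unchanged. Note that $m \notin \ell_r \subseteq v$ so the enlargement is disjoint, and the new configuration again satisfies the constraints defining $\mathcal{A}$, but now with $v$ replaced by $v'$.

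The essential remaining step is to show that these two families, summed across all base contributions, reproduce precisely the right-hand side of the formula for $v'$. The natural strategy is to partition each index set $\mathcal{A}(\lambda', s')$ for $v'$ according to whether $\{m\}$ forms its own block or sits inside a block of size at least two: the singleton case is the image of the appending mechanism via $\lambda = \lambda' - \bfe_k$ and $s = s' - 1$, and the non-singleton case is the image of the enlarging mechanism via the stripping map $\ell'_r \mapsto \ell'_r \setminus \{m\}$ on the block containing $m$. I expect the main obstacle to lie in this combinatorial bookkeeping rather than in the analytic content: one must verify the bijection between the right-hand side for $v'$ and the disjoint union of the two image families with the correct multiplicities. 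In the ordered-tuple interpretation of $\mathcal{A}(\lambda, s)$, this requires tracking the positional placement of the newly inserted or enlarged pair against the free position it may occupy in the $v'$-tuple, and reconciling any symmetry factors that arise from permuting otherwise indistinguishable pairs. Once this correspondence is established, collecting the two families yields the stated expression for $\partial^{v'} h$ and closes the induction.
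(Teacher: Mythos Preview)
The paper does not prove this theorem: it is quoted as a known result from Constantine and Savits \cite{cons:savi:1996} and used only as a black-box tool in the growth bounds of Propositions~\ref{prop:h_f}--\ref{prop:h_det}. There is consequently no paper proof to compare your proposal against.

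Your inductive strategy is the standard route to formulas of this type and is correct in outline: the base case is the ordinary chain rule, and differentiating once more splits into ``hit the outer $\partial^\lambda f$'' versus ``hit one inner factor,'' which corresponds exactly to the dichotomy you describe between appending a new singleton block $(k,\{m\})$ and enlarging an existing block. The concern you raise about ordered tuples and symmetry factors is a genuine one---as written, $\mathcal{A}(\lambda,s)$ records an ordered list $(k_1,\dots,k_s,\ell_1,\dots,\ell_s)$, so permuting positions that carry the same $k$-value produces distinct elements of $\mathcal{A}$ with identical products, and one must check that the two mechanisms in the inductive step generate new tuples with precisely the right multiplicities. In practice this is handled either by tracking the position of the new pair explicitly or by passing to unordered set partitions and inserting the compensating $\lambda!$ factors; either bookkeeping resolves the induction. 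For the paper's purposes the precise constant is irrelevant, since the formula is invoked only to assert that $\partial^v h$ is a \emph{finite} sum of terms of the shape $(\partial^\lambda f)\circ\bdtau \cdot \prod_r \partial^{\ell_r}\bdtau_{k_r}$, each of which is then bounded individually.
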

\subsection{Transport map and importance sampling}\label{subsec: Transport map and importance sampling}
QMC and its randomized variant (RQMC) produce point sets that uniformly cover the unit cube $[0,1]^d$. In many applications, we must sample from an intractable target distribution $\pi$ on $\mathbb{R}^d$ to estimate $E_{X\sim\pi}[f(X)]$.  
\iffalse
When $p$ admits a tractable inverse cumulative distribution function, exact transformation of uniform samples is possible.  For instance, to draw from the standard Gaussian law $\mathcal{N}(0,I_d)$, one may apply the component‐wise inverse CDF $\Phi^{-1}$ to each QMC or RQMC point.  Equivalently, the Box–Muller transform
\(
  Z_1 \;=\;\sqrt{-2\ln u_1}\,\sin(2\pi u_2), 
  \quad
  Z_2 \;=\;\sqrt{-2\ln u_1}\,\cos(2\pi u_2),
\)
maps two independent draws $u_1,u_2\sim\mathrm{Uniform}(0,1)$ into independent samples $Z_1,Z_2\sim\mathcal{N}(0,1)$.
\fi
Thus we want to find a measurable map
\[
  \bdtau\colon [0,1]^d \;\longrightarrow\;\mathbb{R}^d
  \quad\text{such that}\quad
  \mathbf{x}=\bdtau(\mathbf{u})\sim \pi
  \quad\text{for}\quad
  \mathbf{u}\sim\mathrm{Uniform}\bigl([0,1]^d\bigr).
\]
In this setting, $\bdtau$ \emph{pushes forward} the uniform distribution on $[0,1]^d$ to the target distribution $\pi$, and is referred to as a \emph{transport map}. We denote the pushforward of a density $q$ by a map $\bdtau$ as $\bdtau_{\#} q$. Although closed‐form transport maps are available only in special cases, one may approximate $\bdtau$ within a flexible function class so that the induced density
\[
  q_{\bdtau}(\mathbf{x})
  = \bigl(\bdtau_{\#}\,q_u\bigr)(\mathbf{x})
  \quad\text{with}\quad
  q_u(\mathbf{u})=\mathbf{1}_{[0,1]^d}(\mathbf{u})
\]
closely matches $p(\mathbf{x})$.  Such approximate transport maps enable the direct application of QMC and RQMC sampling to a broad array of target distributions. If $\bdtau: (0,1)^d \to \R^d$ is a $C^1$ diffeomorphism, then the density $q_{\bdtau}$ is given by the change-of-variable formula:
\begin{equation}
q_{{\bdtau}}(\mathbf{x}) = |\text{det} J_{{\bdtau}} ({\bdtau}^{-1}(x))|^{-1} \label{equ: change of variable}
\end{equation}
where $J_{{\bdtau}}$ is the Jacobian matrix of the transformation ${\bdtau}$ , defined as
\[ J_{{\bdtau}}(\mathbf{u})= \left(\frac{\partial {\bdtau}_i}{\partial u_j} \right)_{1\leq i,j\leq d}.\]

Therefore, to estimate the expectation $\mu = \E_{X\sim\pi}[f(X)]$, we consider to find a $C^1$ diffeomorphism $\bdtau: (0,1)^d \to \R^d$ that transforms the uniform distribution $\mathcal{U}(0,1)^d$ to a proposal distribution $q_{\bdtau}$. If we have a good transport map ${\bdtau}$ such that $q_{\bdtau} \approx \pi$, the expectation $\mathbb{E}_\pi[f(X)]$ is then approximated by $\mathbb{E}_{\mathbf{U} \sim \mathcal{U}(0,1)^d}[f \circ {\bdtau}(\mathbf{U})]$, which can be estimated by
\begin{align}
\frac{1}{n} \sum_{i=1}^n (f \circ \bdtau)(\bfu_i)
\end{align} 
with the point set $\{\bfu_i\}_{i=1}^n \subset (0,1)^d$.

However, if \(q_{\bdtau} \neq \pi\), the estimator incurs a bias. We can correct this bias by importance sampling. Note that
\begin{equation}
E_{X\sim \pi}[f(X)] = E_{Y \sim q_{{\bdtau}}}[\frac{f\pi}{q_{{\bdtau}}}(Y)]=E_{\mathbf{U}\sim \mathcal{U}(0,1)^d}[\frac{f\pi}{q_{{\bdtau}}}\circ {\bdtau}(\mathbf{U})]. \notag
\end{equation}
Then using \eqref{equ: change of variable} , the integrand of interest writes
$$
h(\bfu) \coloneqq \left(\frac{f\pi}{q_{\bdtau}}\circ\bdtau\right)(\bfu) = (f \circ \bdtau)(\bfu) \cdot (\pi \circ \bdtau)(\bfu) \cdot \abs{\det J_{\bdtau}(\bfu)}.
$$
Thus the expectation $\mu$ can be estimated by
\begin{align}
    \hat{I}_n:= \frac{1}{n} \sum_{i=1}^n h(\bfu_i) = \frac{1}{n} \sum_{i=1}^n (f \circ \bdtau)(\bfu_i) \cdot (\pi \circ \bdtau)(\bfu_i) \cdot \abs{\det J_{\bdtau}(\bfu_i)}
\end{align}
with the point set $\{\bfu_i\}_{i=1}^n \subset (0,1)^d$. If we use points $\bfu_i \sim \mathrm{Uniform}([0,1]^d)$, such as identically distributed (i.i.d.) samples or RQMC points, then $\hat{I}_n$ is an \textbf{unbiased} estimator of $\mu$.

In Section~\ref{sec:convergence}, we will prove that $h(\bfu)$ satisfies  boundary growth condition under some assumptions for $f,\pi$ and $\bdtau$, thereby ensuring the $O(n^{-1+\epsilon})$ RMSE convergence rate for estimator $\hat{I}_n$ with scrambled nets $\{\bfu_i\}_{i=1}^n$.

\subsection{Flow Matching and Conditional Flow Matching}
Let $p_0$ denote a simple prior distribution (e.g., Gaussian or Logistic) and $p_1 = \pi$ a complex target distribution on $\mathbb{R}^d$. The objective of flow-based generative modeling is to construct a continuous mapping, or \emph{flow}, that transports $p_0$ to $p_1$. To construct such a flow, we consider a family of conditional probability paths. Given a data sample $z$, a conditional probability path $\rho(x,t \mid z)$ for $t\in[0,1]$ is a probability flow satisfying
\begin{equation}
    \begin{cases}
        \rho(x,0 \mid z) = p_0(x), \\
        \rho(x,1 \mid z) = \delta(x-z).
    \end{cases}
\end{equation}
Such a conditional probability path can be induced by a flow on $\mathbb{R}^d$. Let $v:[0,1]\times\mathbb{R}^d\to\mathbb{R}^d$ be a time-dependent vector field that generates a diffeomorphic map $\phi:[0,1]\times\mathbb{R}^d\to\mathbb{R}^d$ through the ordinary differential equation
\begin{equation}
    \begin{cases}
        \dfrac{d\phi_t(x|z)}{dt} = v(\phi_t(x|z),t \mid z), \\
        \phi_0(x|z) = x.
    \end{cases}
\end{equation}

A particularly simple and feasible choice for the velocity field is the straight-line flow 
\begin{equation}
    v(x,t \mid z) = \frac{z - x}{1 - t}.
\end{equation}
The corresponding conditional probability path then satisfies the continuity equation
\begin{equation}
    \frac{\partial \rho(x,t \mid z)}{\partial t}
    + \nabla_x \cdot \big(\rho(x,t \mid z)\, v(x,t \mid z)\big)
    = 0,
\end{equation}
in the sense of distributions.

To obtain a flow that transports $p_0$ to $p_1$, we multiply both sides by $p_1(z)$ and integrate over $z$, yielding
\begin{equation}
    \int_{\mathbb{R}^d} p_1(z)
    \left(
        \frac{\partial \rho(x,t \mid z)}{\partial t}
        + \nabla_x \cdot \big(\rho(x,t \mid z)v(x,t \mid z)\big)
    \right) dz = 0, 
    \qquad \forall x\in\mathbb{R}^d.
\end{equation}
Define
\[
    \hat{\rho}(x,t) := \int_{\mathbb{R}^d} p_1(z)\,\rho(x,t\mid z)\,dz,
\]
which satisfies
\begin{equation}
    \begin{aligned}
        \hat{\rho}(x,0) &= \int_{\mathbb{R}^d} p_1(z)\,p_0(x)\,dz = p_0(x), \\
        \hat{\rho}(x,1) &= \int_{\mathbb{R}^d} p_1(z)\,\delta(x-z)\,dz = p_1(x).
    \end{aligned}
\end{equation}
Thus, $\hat{\rho}(x,t)$ defines a probability path connecting $p_0$ and $p_1$.

The resulting flow satisfies the continuity equation
\begin{equation}
    \frac{\partial \hat{\rho}(x,t)}{\partial t} 
    + \nabla_x \cdot \big(\hat{\rho}(x,t)\,v(x,t)\big)
    = 0,
\end{equation}
where the average velocity field $u$ is given by
\begin{equation}\label{eq:average_velocity}
    v(x,t)
    :=
    \frac{
        \int_{\mathbb{R}^d} p_1(z)\,\rho(x,t\mid z)\, v(x,t\mid z)\,dz
    }{
        \hat{\rho}(x,t)
    }.
\end{equation}

The goal of \emph{Flow Matching} (FM) is to directly learn a neural approximation $v(x,t;\theta)$ of the velocity field $v(x,t)$ by minimizing
\begin{equation}
    \mathcal{L}_{\mathrm{FM}}(\theta)
    := \mathbb{E}_{t \sim \mathcal{U}[0,1],\,x_t \sim \hat{\rho}(\cdot,t)}
       \big\| v(x_t,t;\theta) - u(x_t,t) \big\|_2^2 
    = \int_0^1 
      \int_{\mathbb{R}^d}
      \left\| 
        v(x,t;\theta)
        - 
        \frac{
            \int_{\mathbb{R}^d}p_1(z)\rho(x,t\mid z)v(x,t\mid z)\,dz
        }{
            \hat{\rho}(x,t)
        }
      \right\|_2^2
      \hat{\rho}(x,t)\,dx\,dt.
    \label{eq:fm_loss}
\end{equation}
However, this formulation is often impractical since computing the averaged velocity field $v(x,t)$ is typically computationally expensive.

To address this issue, one may exchange the order of expectation and conditioning, leading to the \emph{conditional flow matching} (CFM) objective
\begin{equation}
    \mathcal{L}_{\mathrm{CFM}}(\theta)
    := \mathbb{E}_{t \sim \mathcal{U}[0,1],\, z \sim p_1,\, x_t \sim \rho(\cdot,t\mid z)}
       \big\| v(x_t,t;\theta) - v(x_t,t \mid z) \big\|_2^2 
    = \int_0^1\!\!
      \int_{\mathbb{R}^d}\!\!
      \int_{\mathbb{R}^d}
      \|v(x,t;\theta) - v(x,t\mid z)\|_2^2
      \rho(x,t\mid z)\,p_1(z)\,dz\,dx\,dt.
    \label{eq:cfm_loss}
\end{equation}
It can be shown that the FM objective \eqref{eq:fm_loss} and the CFM objective \eqref{eq:cfm_loss} yield identical gradients\cite{lipmanflow}
\[
\nabla_\theta \mathcal{L}_{\mathrm{FM}}(\theta)
=
\nabla_\theta \mathcal{L}_{\mathrm{CFM}}(\theta),
\]
while the latter is significantly more tractable in practice.

When the prior $p_0$ is Gaussian, another common and analytically convenient choice for the conditional path $\rho(x,t\mid z)$ is the Gaussian family
\begin{equation}
    \rho(x,t\mid z) 
    = \mathcal{N}\!\big(x\,\big|\,\mu_t(z),\, \sigma_t^2(z) I\big),
    \label{eq:gaussian_cond_path}
\end{equation}
where $\mu_t(z)$ and $\sigma_t(z)$ are the time-dependent mean and standard deviation, respectively. The corresponding conditional flow map and velocity field are given by
\begin{equation}
\left\{
\begin{array}{ccl}
       \phi_t(x\mid z) & = & \sigma_t(z)\, x + \mu_t(z),\\[0.3em]
       v(x,t\mid z) & = & \displaystyle
       \frac{\dot{\sigma}_t(z)}{\sigma_t(z)}\,
       \big(x - \mu_t(z)\big)
       + \dot{\mu}_t(z),
\end{array}
\right.
    \label{eq:gaussian_map_velocity}
\end{equation}
where the overdot denotes differentiation with respect to $t$.

A particularly important instance of the Gaussian construction arises when the mean and variance interpolate linearly between the endpoints. Let
\begin{equation}
    \mu_t(z) = t\,z, 
    \qquad 
    \sigma_t(z) = 1 - (1 - \sigma)\,t,
    \label{eq:ot_linear_interp}
\end{equation}
for some prescribed terminal standard deviation $\sigma\in[0,1]$. In this case, particles move along straight trajectories in $\mathbb{R}^d$, and the corresponding flow $\phi_t(\cdot\mid z)$ coincides with the Wasserstein--2 optimal transport displacement interpolation between the initial Gaussian $p_0=\mathcal{N}(0,I)$ and the terminal Gaussian $p_1(\cdot\mid z)=\mathcal{N}(z,\sigma^2 I)$, thereby defining a geodesic in the Wasserstein metric. However, at $t=1$, the resulting marginal density becomes
\(
    \hat{\rho}(x,1) 
    = p_1 \ast \mathcal{N}(\boldsymbol{0},\sigma^2 I),
\)
that is, a Gaussian-smoothed version of $p_1$. Taking the limit $\sigma\to 0$ recovers the \textit{straight-line flow} described above. For further variants and design choices for the velocity field, we refer to \cite{tongimproving, kornilov2024optimal, chen2018neural} .
\section{Transport maps for QMC and RQMC with flow matching}\label{sec:transport_map_flow_matching}
In this section we explain how flow--based generative models give rise to transport maps that are well suited for QMC and RQMC integration.

The probability--flow ODE formulation of flow matching describe a deterministic flow that transports a simple prior distribution \(p_0\) to a complex target distribution \(p_1\) on \(\mathbb{R}^d\).  
Let \(\bdv : \mathbb{R}^d \times [0,1] \to \mathbb{R}^d\) denote the (average) velocity field as in the flow matching formulation (see \eqref{eq:average_velocity}).  
The associated flow \(\Psi : \mathbb{R}^d\times[0,1]\to\mathbb{R}^d\) is defined as the solution of the initial–value problem
\begin{equation}\label{forward}
  \partial_t \Psi(\mathbf{x},t) = \bdv(\Psi(\mathbf{x},t),t),
  \qquad
  \Psi(\mathbf{x},0) = \mathbf{x},
  \quad \mathbf{x} \sim p_0.
\end{equation}
At the terminal time, the induced transport map
\[
  \tilde{\bdtau}(\mathbf{x}) := \Psi(\mathbf{x},1)
\]
pushes forward \(p_0\) to \(p_1\), that is,
\(
  (\tilde{\bdtau})_{\#} p_0 \approx p_1.
\)

To interface this construction with QMC and RQMC on the unit cube, we combine the learned flow with a base transformation \(G:[0,1]^d\to\mathbb{R}^d\) that maps the uniform distribution on \([0,1]^d\) to the prior \(p_0\). The resulting composite map
\[
  \bdtau := \tilde{\bdtau}\circ G : [0,1]^d \longrightarrow \mathbb{R}^d
\]
is a transport map of the form considered in the previous subsection: it pushes the uniform distribution on \([0,1]^d\) onto the data distribution \(p_1\).

We also recall how the flow can be coupled with the evolution of the log–density along the trajectory.  In the continuous normalizing flow (CNF) literature (see, e.g., \cite{chen2018neural,behrmann2019invertible,ben2022matching}), the continuity equation associated with the velocity field \(\bdv\) implies the instantaneous change–of–variables identity
\[
  \frac{d}{dt}\log \hat{\rho}\bigl(\Psi(\mathbf{x},t),t\bigr)
  = - \nabla_\mathbf{x} \cdot \bdv\bigl(\Psi(\mathbf{x},t),t\bigr),
\]
where \(\hat{\rho}(\mathbf{x},t)\) denotes the density of \(\Psi(\cdot,t)_{\#}p_0\). Let $l(t) = \log \hat{\rho}\bigl(\Psi(\mathbf{x},t),t\bigr)$, the state \(\Psi(\mathbf{x},t)\) and its log–density can be advanced jointly by the augmented ODE
\begin{equation}\label{eq:joint_ode_state_logp}
  \frac{d}{dt}
  \begin{bmatrix}
    \Psi(\mathbf{x},t)\\[0.3em]
    \ell(t)
  \end{bmatrix}
  =
  \begin{bmatrix}
    \bdv\bigl(\Psi(\mathbf{x},t),t\bigr)\\[0.3em]
    -\,\nabla_\mathbf{x} \cdot \bdv\bigl(\Psi(\mathbf{x},t),t\bigr)
  \end{bmatrix},
  \qquad \begin{bmatrix}
       \ell(0)\\ \Psi(\mathbf{x},0)\end{bmatrix} =\begin{bmatrix}
       \log p_0(\mathbf{x})\\ \mathbf{x}\end{bmatrix} 
\end{equation}
so that \(\ell(1) = \log p_1(\Psi(\mathbf{x},1))\).

While the continuous formulation provides a theoretical basis, practical implementation requires two key approximations: discretizing the time integral and parameterizing the velocity field.
In practice, the true velocity field \(\bdv\) is unknown and is approximated by a neural network \(\bdv_\theta(\mathbf{x}, t)\) with parameters \(\theta\). Thus the ODE in \eqref{forward} is replaced by
\begin{equation}\label{forward_approx}
  \partial_t \Psi_\theta(\mathbf{x},t) = \bdv_\theta(\Psi_\theta(\mathbf{x},t),t),
  \qquad
  \Psi_\theta(\mathbf{x},0) = \mathbf{x}.
\end{equation}

To compute the map \(\tilde{\bdtau}(\mathbf{x})\), we we must solve the ODE numerically. We consider the Forward Euler discretization with $N$ steps of size $h = 1/N$. Let $t_k = kh$. The step map ${\bdtau}^k: \mathbb{R}^d \to \mathbb{R}^d$ is defined as 
\[
{\bdtau}^k(\mathbf{x}) = \mathbf{x} + h \bdv_\theta(\mathbf{x}, t_k).
\]
The total transport map $\tilde{\bdtau}$ is then approximated by the composition of these step maps as
\[
{\bdtau}^{N-1} \circ \dots \circ {\bdtau}^0(\mathbf{x}).
\]

To interface this construction with QMC and RQMC on the unit cube, we combine the learned flow with a base transformation \(G:[0,1]^d\to\mathbb{R}^d\) that maps the uniform distribution on \([0,1]^d\) to the prior \(p_0\). A choice is the component-wise inverse logistic function (logit function), defined as 
\[
G(u) = \log\left(\frac{u}{1-u}\right).
\]
Since $G$ is differentiable and monotonically increasing, its inverse corresponds to the CDF of the logistic distribution.

The final composite transport map is thus
\[
  \bdtau(\mathbf{u}) := {\bdtau}^{N-1} \circ \dots \circ {\bdtau}^0 \circ G(\mathbf{u}).
\]
This map $\bdtau: (0,1)^d \to \mathbb{R}^d$ pushes the uniform distribution on \([0,1]^d\) onto the data distribution \(p_1\).

Once the transport map \(\bdtau\) is constructed via flow matching, we can use the content introduced in Section~\ref{sec:background} to estimate expectations using (R)QMC points. We call the resulting method Flow Maching Importance Sampling QMC (FM-ISQMC).
Below we will conduct a detailed convergence order analysis in Section~\ref{sec:convergence}
 and Section~\ref{sec:flow_matching_growth}. Specifically, Section~4 proposes assumptions for general \(\bdtau\) from \((0,1)^d\) to \(\mathbb{R}^d\) and performs the analysis; Section~\ref{sec:flow_matching_growth} verifies that the \(\bdtau\) constructed using flow matching satisfies the assumptions of Section~\ref{sec:convergence}
, so that the conclusions of Section~\ref{sec:convergence} hold for such \(\bdtau\).

\section{Convergence analysis for general transport maps with QMC}
\label{sec:convergence}

In this section, we analyze the convergence of RQMC estimator for estimating expectation $\mu = \E_{X\sim\pi}[f(X)]$ using a general transport map $\bdtau: (0,1)^d \to \R^d$ with importance sampling. Recall subsection~\ref{subsec: Transport map and importance sampling} where we defined the integrand of interest as
$$h(\bfu) = (f \circ \bdtau)(\bfu) \cdot (\pi \circ \bdtau)(\bfu) \cdot \abs{\det J_{\bdtau}(\bfu)}.$$
We will show that under certain assumptions for $f$, $\pi$, and $\bdtau$, the integrand $h(\bfu)$ satisfies boundary growth condition (Assumption \ref{assump: growth}). This will ensure that the RQMC estimator $\hat{I}_n = \frac{1}{n} \sum_{i=1}^n h(\bfu_i)$ achieves an RMSE convergence rate of $O(n^{-1+\epsilon})$ for any $\epsilon > 0$ when using scrambled nets.
\subsection{Assumptions}

Our analysis rests on the following set of assumptions regarding the growth and decay of the integrand, the target density and the transport map's derivatives. 

\begin{assumption}[Integrand $f$ Growth]
\label{assum:f}
Assume there exists a univariate CDF $F(x)$ such that for all multi-indices $\lambda \in \mathbb{N}_0^d$ with $|\lambda| \le d$, and for arbitrarily small $B > 0$, there exists $C_1 > 0$ such that for all $\bfx \in \R^d$, the following bound holds:
$$
|\partial^\lambda f(\bfx)| \le C_1 \prod_{k=1}^d \left[ \min(F(x_k), 1-F(x_k)) \right]^{-B}
$$
\end{assumption}

\begin{assumption}[Target $\pi$ Decay]
\label{assum:pi}
Assume that $\pi(\bfx)$ and its derivatives decay exponentially. Specifically, there exist constants $C_2 > 0$ and $\alpha > 0$ such that for all multi-indices $\lambda \in \mathbb{N}_0^d$ with $|\lambda| \le d$, we have
$$
|\partial^\lambda \pi(\bfx)| \le C_2 \prod_{k=1}^d e^{-\alpha |x_k|}
$$
\end{assumption}

\begin{assumption}[Transport growth]
\label{assum:tau} 
Let $F(x)$ be the same CDF as in Assumption~\ref{assum:f}. Assume that for any $v \subseteq \{1, \dots, d\}$, $m \in \{1, \dots, d\}$, and $j \in \{1, \dots, d\}$, and for arbitrarily small $B > 0$, there exists $C_3 > 0$ such that for all $\bfu \in (0,1)^d$,
\begin{align}
|\partial^v(\partial^{\{m\}}\bdtau_j)(\bfu)| \le C_3 \cdot \min(u_m, 1-u_m)^{-1} \cdot \prod_{k=1}^d \min(u_k, 1-u_k)^{-B - \Indc{k \in v}} \label{term:tau_deriv}
\end{align}
Moreover, there exists constants $B_0>0, C_4>0$ such that for any $k \in \{1, \dots, d\}$,
\begin{align} \label{eq:tau_inv_growth}
\min(F (\tau(\mathbf{u})_k), 1 - F (\tau(\mathbf{u})_k))^{-1} \le C_4 \prod_{l=1}^{d} \min(u_l, 1 - u_l)^{-B_0}.
\end{align}
Additionally, we assume the transport map grows sufficiently fast near the boundary. Specifically, there exists a constant $C_5 > 1/\alpha$ and a constant $C'\in \mathbb{R}$ such that
\begin{align} \label{eq:tau_lower_bound}
|\tau(\mathbf{u})_j| \ge C_5 |\ln(\min(u_j, 1-u_j))| - C'.
\end{align}
\end{assumption}

\begin{remark}
The inequality~\eqref{term:tau_deriv} in Assumption \ref{assum:tau} implies that
$$
|\partial^v(\bdtau_j)(\bfu)| \le C_3 \cdot \prod_{k=1}^d \min(u_k, 1-u_k)^{-B - \Indc{k \in v}}.
$$
\end{remark}

\subsection{Growth Analysis of Integrand Components}

We decompose the integrand $h(\bfu) = (f \circ \bdtau)(\bfu) \cdot (\pi \circ \bdtau)(\bfu) \cdot \abs{\det J_{\bdtau}(\bfu)}$ and bound the growth of each term's partial derivatives.

\begin{proposition}[Growth of $f \circ \bdtau$]
\label{prop:h_f}
Under Assumptions \ref{assum:f} and \ref{assum:tau}, for arbitrarily small $B > 0$, there exists a constant $C > 0$ such that $f \circ \bdtau$ satisfies the inequality
$$
|\partial^v(f \circ \bdtau(\bfu))| \le C \prod_{k=1}^d \left[ \min(u_k, 1-u_k) \right]^{-B - \Indc{k \in v}}.
$$
\end{proposition}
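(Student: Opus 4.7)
\textbf{Proof proposal for Proposition \ref{prop:h_f}.}

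The natural approach is to expand the mixed partial derivative of the composition $f\circ\bdtau$ via the multivariate Faà di Bruno formula (Theorem \ref{thm:faa_di_bruno}), and then bound each resulting term using Assumption \ref{assum:f} for the outer derivatives and Assumption \ref{assum:tau} (together with the remark) for the inner derivatives. Since the Faà di Bruno expansion produces only a finite sum (indexed by multi-indices $\lambda$ with $|\lambda|\le|v|\le d$ and partitions $(\mathbf{k},\boldsymbol\ell)\in\mathcal A(\lambda,s)$), it suffices to bound each summand with the target growth rate and then absorb the finite combinatorial count into the constant $C$.

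For the outer factor $(\partial^\lambda f)(\bdtau(\bfu))$, I would apply Assumption \ref{assum:f} with a small auxiliary parameter $B_1>0$ (to be chosen later), giving
\[
|(\partial^\lambda f)(\bdtau(\bfu))|\le C_1\prod_{k=1}^d\bigl[\min(F(\tau(\bfu)_k),1-F(\tau(\bfu)_k))\bigr]^{-B_1}.
\]
Then \eqref{eq:tau_inv_growth} converts each of the $d$ factors into a product in the original coordinates $\bfu$, so the outer factor is bounded by a constant times $\prod_{k=1}^d\min(u_k,1-u_k)^{-dB_0B_1}$. Choosing $B_1$ so that $dB_0B_1$ is arbitrarily small makes this contribution harmless.

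For the inner product $\prod_{r=1}^s\partial^{\ell_r}\bdtau_{k_r}(\bfu)$, I would apply the remark following Assumption \ref{assum:tau} to each factor with an auxiliary parameter $B_2>0$, obtaining
\[
\Bigl|\prod_{r=1}^s\partial^{\ell_r}\bdtau_{k_r}(\bfu)\Bigr|\le C_3^{\,s}\prod_{k=1}^d\min(u_k,1-u_k)^{-sB_2-\sum_{r=1}^s\Indc{k\in\ell_r}}.
\]
The key combinatorial observation (and essentially the whole point of the argument) is that $\ell_1,\dots,\ell_s$ form a \emph{partition} of $v$, so $\sum_{r=1}^s\Indc{k\in\ell_r}=\Indc{k\in v}$ exactly. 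Moreover $s\le|v|\le d$, so the total contribution of the $B_2$ terms is at most $dB_2$, which can also be made arbitrarily small by choice of $B_2$.

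Multiplying the two bounds yields, for each summand in the Faà di Bruno expansion, a bound of the form $C'\prod_{k=1}^d\min(u_k,1-u_k)^{-(dB_0B_1+dB_2)-\Indc{k\in v}}$. Given target $B>0$, I would then choose $B_1,B_2$ so small that $dB_0B_1+dB_2\le B$, take the maximum over the finitely many summands, and set $C$ to absorb the combinatorial count. The main (and only real) obstacle is this exponent bookkeeping: one must track that the indicator terms $\Indc{k\in\ell_r}$ telescope to exactly $\Indc{k\in v}$ thanks to the partition structure, while every other contribution can be made arbitrarily small; once this is recognized, the result follows directly.
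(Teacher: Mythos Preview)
Your proposal is correct and follows essentially the same route as the paper: apply the multivariate Fa\`a di Bruno formula, bound the outer factor via Assumption~\ref{assum:f} combined with \eqref{eq:tau_inv_growth}, bound the inner product via the remark after Assumption~\ref{assum:tau}, and use the partition structure of $\ell_1,\dots,\ell_s$ to collapse the indicator sums to $\Indc{k\in v}$. Your exponent bookkeeping (introducing separate auxiliary parameters $B_1,B_2$ and tracking $s\le d$) is in fact tidier than the paper's own version.
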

\begin{proof}
We apply the multivariate Faa di Bruno formula to $\partial^v (f \circ \bdtau)$. A generic term in the expansion is given by
$$
T = (\partial^\lambda f)(\bdtau(\bfu)) \cdot \prod_{l=1}^{|\lambda|} \partial^{v_l} \bdtau_{j_l}(\bfu)
$$
where $\lambda$ is a multi-index with $|\lambda|$ equal to the number of blocks in the partition, and $\{v_l\}$ partitions $v$.
By Assumption \ref{assum:f}, evaluating at $\bfx = \bdtau(\bfu)$, we obtain
\begin{align}
|(\partial^\lambda f)(\bdtau(\bfu))| \le C_1 \prod_{k=1}^d \left[ \min(F(\tau(\bfu)_k), 1-F(\tau(\bfu)_k)) \right]^{-B}. \label{eq:f_bound}
\end{align}
Using the value growth condition from Assumption \ref{assum:tau}, namely
$$
\left[ \min(F(\tau(\bfu)_k), 1-F(\tau(\bfu)_k)) \right]^{-1} \le C_4 \prod_{l=1}^{d} \min(u_l, 1 - u_l)^{-B_0},
$$
substituting this bound into \eqref{eq:f_bound}, we obtain there exists a constant $\tilde{C} > 0$ such that
$$
|(\partial^\lambda f)(\bdtau(\bfu))| \le \tilde{C} \prod_{l=1}^d \min(u_l, 1 - u_l)^{-d B_0 B}.
$$
For the product of derivatives, Assumption \ref{assum:tau} implies
$$
\left| \prod_{l=1}^{|\lambda|} \partial^{v_l} \bdtau_{j_l} \right| \le C_3 \prod_{k=1}^d \min(u_k, 1-u_k)^{-1 - B - \Indc{k \in v}}.
$$
Multiplying these bounds yields
$$
|T| \le C \prod_{k=1}^d \min(u_k, 1-u_k)^{-B_0 - 1 - B - \Indc{k \in v}}.
$$
Since $B_0$ and $B$ are arbitrarily small, their combination $B_0 + 1 + B$ can be written as $1 + B'$ where $B'$ is arbitrarily small. Relabeling $B'$ as $B$, the exponent simplifies to $-B - \Indc{k \in v}$, which matches the required form.
\end{proof}

\begin{proposition}[Growth of $\pi \circ \bdtau$ ]
\label{prop:h_pi}
Under Assumptions \ref{assum:pi} and \ref{assum:tau}, for arbitrarily small $B > 0$, there exists a constant $C > 0$ such that $\pi \circ \bdtau$ satisfies the bound
$$
|\partial^v(\pi \circ \bdtau(\bfu))| \le C \prod_{k=1}^d \left[ \min(u_k, 1-u_k) \right]^{1 - B - \Indc{k \in v}}.
$$
\end{proposition}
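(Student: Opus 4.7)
The plan is to mirror the proof of Proposition \ref{prop:h_f}, but in place of the value-growth bound \eqref{eq:tau_inv_growth} I will exploit the lower bound \eqref{eq:tau_lower_bound} on $|\tau(\bfu)_k|$ together with the exponential decay of $\pi$ from Assumption \ref{assum:pi} to produce a strictly positive power of $\min(u_k,1-u_k)$. That positive power is precisely what manifests as the $+1$ in the exponent of the target bound, and the decisive ingredient is the hypothesis $C_5 > 1/\alpha$.

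Assume first that $v \neq \emptyset$; the case $v = \emptyset$ is handled by the same computation with no derivative factors (using Assumption \ref{assum:pi} with $\lambda=0$ directly). I apply the multivariate Fa\`a di Bruno formula (Theorem \ref{thm:faa_di_bruno}) to $\partial^v(\pi \circ \bdtau)(\bfu)$, expressing this derivative as a finite sum of terms of the form
$$
T = (\partial^\lambda \pi)(\bdtau(\bfu)) \cdot \prod_{r=1}^{s} \partial^{\ell_r} \bdtau_{k_r}(\bfu),
$$
with $\{\ell_r\}_{r=1}^{s}$ partitioning $v$ and $|\lambda|,s \le |v| \le d$. For the first factor, Assumption \ref{assum:pi} yields $|(\partial^\lambda \pi)(\bdtau(\bfu))| \le C_2 \prod_k e^{-\alpha |\tau(\bfu)_k|}$, and substituting \eqref{eq:tau_lower_bound} gives
$$
|(\partial^\lambda \pi)(\bdtau(\bfu))| \le C_2 e^{\alpha d C'} \prod_{k=1}^d \min(u_k,1-u_k)^{\alpha C_5},
$$
with $\alpha C_5 > 1$ by hypothesis. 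For the derivative factors I invoke the remark following Assumption \ref{assum:tau}, namely $|\partial^{\ell_r} \bdtau_{k_r}(\bfu)| \le C_3 \prod_k \min(u_k,1-u_k)^{-B - \Indc{k \in \ell_r}}$; multiplying over $r$ and using that $\{\ell_r\}$ partitions $v$, so $\sum_r \Indc{k \in \ell_r} = \Indc{k \in v}$, gives $\prod_r |\partial^{\ell_r} \bdtau_{k_r}| \le C_3^s \prod_k \min(u_k,1-u_k)^{-sB - \Indc{k \in v}}$ with $s \le d$.

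Multiplying the two estimates and absorbing $sB \le dB$ into a single (still arbitrarily small) $B$, the generic Fa\`a di Bruno term satisfies $|T| \le C' \prod_k \min(u_k,1-u_k)^{\alpha C_5 - B - \Indc{k \in v}}$. Because $\min(u_k,1-u_k) \le 1/2 < 1$ and $\alpha C_5 > 1$, a larger exponent only shrinks the value, so the bound is dominated by $C' \prod_k \min(u_k,1-u_k)^{1 - B - \Indc{k \in v}}$, which is the desired form; summing the finitely many Fa\`a di Bruno terms concludes the argument. The principal obstacle is the book-keeping across the Fa\`a di Bruno partition: one must verify that the $\Indc{k \in \ell_r}$ contributions sum to exactly $\Indc{k \in v}$ and that the margin $\alpha C_5 - 1 > 0$ is genuinely large enough to absorb the cumulative $sB$ contributions from the at-most-$d$ derivative factors. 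In contrast to the analysis of $f \circ \bdtau$, no delicate cancellation between competing positive and negative powers is required, since the exponential decay of $\pi$ along the transport map already delivers a net positive exponent.
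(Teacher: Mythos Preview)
Your proposal is correct and follows essentially the same route as the paper's proof: apply Fa\`a di Bruno, bound $(\partial^\lambda \pi)(\bdtau(\bfu))$ via Assumption~\ref{assum:pi} combined with the lower bound \eqref{eq:tau_lower_bound} to extract the positive power $\min(u_k,1-u_k)^{\alpha C_5}$, bound the $\bdtau$-derivative factors via the remark after Assumption~\ref{assum:tau}, and then use $\alpha C_5 > 1$ to reduce the exponent to $1$. Your treatment is in fact slightly more careful than the paper's in tracking the $sB \le dB$ absorption and in noting the $v=\emptyset$ case explicitly.
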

\begin{proof}
We apply the multivariate Faa di Bruno formula to $\partial^v (\pi \circ \bdtau)$. A generic term is given by
$$
T = (\partial^\lambda \pi)(\bdtau(\bfu)) \cdot \prod_{l=1}^{|\lambda|} \partial^{v_l} \bdtau_{j_l}(\bfu)
$$
where $\lambda$ is a multi-index with $|\lambda|$ equal to the number of blocks in the partition, and $\{v_l\}$ partitions $v$.
By Assumption \ref{assum:pi}, the first factor is bounded by
$$
|(\partial^\lambda \pi)(\bdtau(\bfu))| \le C_2 \prod_{k=1}^d e^{-\alpha |\tau(\bfu)_k|}.
$$
Using the growth condition \eqref{eq:tau_lower_bound} from Assumption \ref{assum:tau}, we have $|\tau(\bfu)_k| \ge C_5 |\ln(\min(u_k, 1-u_k))| - C'$. Thus, we have
$$
e^{-\alpha |\tau(\bfu)_k|} \le e^{-\alpha (C_5 |\ln(\min(u_k, 1-u_k))| - C')} = e^{\alpha C'} \min(u_k, 1-u_k)^{\alpha C_5}.
$$
Substituting this back, we obtain
$$
|(\partial^\lambda \pi)(\bdtau(\bfu))| \le \tilde{C} \prod_{k=1}^d \min(u_k, 1-u_k)^{\alpha C_5}.
$$
For the product of derivatives, Assumption \ref{assum:tau} provides the bound
$$
\left| \prod_{l=1}^{|\lambda|} \partial^{v_l} \bdtau_{j_l} \right| \le C_3 \prod_{k=1}^d \min(u_k, 1-u_k)^{ - B - \Indc{k \in v_l}}.
$$
Multiplying these bounds, we have the bound for the generic term $T$ as
$$
|T| \le C \prod_{k=1}^d \min(u_k, 1-u_k)^{\alpha C_5 - B - \Indc{k \in v}} \le C \prod_{k=1}^d \min(u_k, 1-u_k)^{\alpha 1 - B - \Indc{k \in v}},
$$
where we use $\alpha C_5 > 1$ and $\min(u_k, 1-u_k) \le 1$ in the last inequality.
\end{proof}

\begin{proposition}[Growth of $\abs{\det J_{\bdtau}}$]
\label{prop:h_det}
Under Assumption \ref{assum:tau}, for arbitrarily small $B > 0$, there exists a constant $C > 0$ such that the determinant of a general dense Jacobian satisfies the following growth bound
$$
|\partial^v(\abs{\det J_{\bdtau}(\bfu)})| \le C \prod_{k=1}^d \left[ \min(u_k, 1-u_k) \right]^{-B - 1 - \Indc{k \in v}}.
$$
\end{proposition}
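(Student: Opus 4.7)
The plan is to follow the pattern of Propositions \ref{prop:h_f} and \ref{prop:h_pi}, but replacing Fa\`a di Bruno with the explicit Leibniz expansion of the determinant. First, I would dispense with the absolute value: since $\bdtau$ is a $C^1$ diffeomorphism on the connected domain $(0,1)^d$, $\det J_{\bdtau}$ has constant sign, so $|\det J_{\bdtau}(\bfu)| = \varepsilon \det J_{\bdtau}(\bfu)$ for some fixed $\varepsilon \in \{-1,+1\}$ and $\partial^v |\det J_{\bdtau}| = \varepsilon \partial^v \det J_{\bdtau}$. It therefore suffices to bound $|\partial^v \det J_{\bdtau}(\bfu)|$.

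Next I would expand using the Leibniz formula
$$
\det J_{\bdtau}(\bfu) = \sum_{\sigma \in S_d} \operatorname{sgn}(\sigma) \prod_{j=1}^d \partial^{\{\sigma(j)\}} \bdtau_j(\bfu),
$$
and apply the generalized product rule to a single permutation term:
$$
\partial^v \prod_{j=1}^d \partial^{\{\sigma(j)\}}\bdtau_j(\bfu) = \sum_{(v_1,\dots,v_d)} \prod_{j=1}^d \partial^{v_j} \partial^{\{\sigma(j)\}} \bdtau_j(\bfu),
$$
where the sum runs over ordered partitions $v = \bigsqcup_{j=1}^d v_j$ (allowing empty blocks). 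Each factor is now exactly of the form bounded by the estimate \eqref{term:tau_deriv} in Assumption \ref{assum:tau}, namely
$$
\bigl|\partial^{v_j} \partial^{\{\sigma(j)\}} \bdtau_j(\bfu)\bigr| \le C_3 \, \min(u_{\sigma(j)},1-u_{\sigma(j)})^{-1} \prod_{k=1}^d \min(u_k,1-u_k)^{-B - \Indc{k \in v_j}}.
$$

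The core bookkeeping step is to multiply these bounds over $j=1,\dots,d$ and track the exponents of $\min(u_k,1-u_k)$. Two facts combine: (i) because $\sigma$ is a permutation, $\prod_{j=1}^d \min(u_{\sigma(j)},1-u_{\sigma(j)})^{-1} = \prod_{k=1}^d \min(u_k,1-u_k)^{-1}$, producing the required exponent $-1$; (ii) because $\{v_j\}$ partitions $v$, for each $k$ the indicator $\Indc{k \in v_j}$ is nonzero for at most one $j$, and sums over $j$ to $\Indc{k \in v}$. The $-B$ contributions aggregate to $-dB$, which is still arbitrarily small and can be absorbed by relabeling. Since the number of permutations $\sigma$ and ordered partitions $(v_1,\dots,v_d)$ is finite and independent of $\bfu$, summing all the terms preserves the bound
$$
\bigl|\partial^v \det J_{\bdtau}(\bfu)\bigr| \le C \prod_{k=1}^d \min(u_k,1-u_k)^{-1 - B - \Indc{k \in v}},
$$
which, together with the sign argument above, gives the claim.

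The only subtlety I anticipate is the exponent bookkeeping in the second step: one must carefully separate the universal $-1$ (coming from the permutation structure of the determinant) from the $-B$ and $-\Indc{k \in v}$ contributions (coming from the partition of $v$). No new analytic estimates beyond Assumption \ref{assum:tau} are needed; the smoothness and non-vanishing of $\det J_{\bdtau}$ are guaranteed by the diffeomorphism hypothesis.
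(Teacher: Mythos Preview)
Your proposal is correct and follows essentially the same argument as the paper: remove the absolute value via the constant-sign diffeomorphism observation, expand the determinant by the Leibniz permutation formula, apply the product rule and Assumption~\ref{assum:tau} to each factor, and collect exponents using that $\sigma$ is a bijection and the $v_j$ partition $v$. The exponent bookkeeping you highlight (the $-1$ from the permutation, $-\Indc{k\in v}$ from the partition, and $-dB$ absorbed into a new $B$) is exactly what the paper does as well.
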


\begin{proof}
Since $\bdtau$ is a diffeomorphism, $\det J_{\bdtau}(\bfu)$ is differentiable and non-zero for all $\bfu \in (0,1)^d$. By the continuity of the determinant, $\det J_{\bdtau}(\bfu)$ maintains a constant sign. Therefore, we have $|\partial^v(\abs{\det J_{\bdtau}(\bfu)})| = |\partial^v(\det J_{\bdtau}(\bfu))|$.

The determinant of the general Jacobian $J_{\bdtau}(\bfu)$ is given by the Leibniz formula
$$
\det J_{\bdtau}(\bfu) = \sum_{\sigma \in S_d} \text{sgn}(\sigma) \underbrace{\prod_{i=1}^d \partial^{\{\sigma(i)\}}\bdtau_i(\bfu)}_{P_\sigma(\bfu)}
$$
where $S_d$ denotes the set of all permutations of $\{1, \dots, d\}$.
By the triangle inequality, $\abs{\partial^v(\det J_{\bdtau}(\bfu))} \le \sum_{\sigma \in S_d} \abs{\partial^v(P_\sigma(\bfu))}$. We only need to bound the growth of a single term $\partial^v(P_\sigma)$.

We apply the general Leibniz rule to $\partial^v(P_\sigma)$. A generic term $T$ in this expansion has the form
$$
T = \prod_{i=1}^d \partial^{v_i}\left(\partial^{\{\sigma(i)\}}\bdtau_i(\bfu)\right)
$$
where $\{v_1, \dots, v_d\}$ is a partition of $v$, meaning $\bigsqcup_{i=1}^d v_i = v$.

We now apply Assumption \ref{assum:tau} to each of the $d$ factors in $T$. For the $i$-th factor, we set $j=i$, $m=\sigma(i)$, and $v'=v_i$. This gives the bound
$$
\abs{\partial^{v_i}(\partial^{\{\sigma(i)\}}\bdtau_i)} \le C_3 \cdot \min(u_{\sigma(i)}, 1-u_{\sigma(i)})^{-1} \cdot \prod_{k=1}^d \min(u_k, 1-u_k)^{-B - \Indc{k \in v_i}}
$$
To bound $|T|$, we multiply these $d$ bounds together to obtain
\begin{align}
|T| 
& \le \prod_{i=1}^d \left[ C_3 \cdot \min(u_{\sigma(i)}, 1-u_{\sigma(i)})^{-1} \cdot \left( \prod_{k=1}^d \min(u_k, 1-u_k)^{-B - \Indc{k \in v_i}} \right) \right] \notag \\
&\le C_3^d \prod_{k=1}^d \min(u_k, 1-u_k)^{-1 - dB - \sum_{i=1}^d \Indc{k \in v_i}} \notag \\
&= C_3^d \prod_{k=1}^d \min(u_k, 1-u_k)^{-1 - B' - \Indc{k \in v}} \notag \\
&\le C \prod_{k=1}^d \min(u_k, 1-u_k)^{-B' - 1 - \Indc{k \in v}} \notag
\end{align}
where in the second inequality we combined the contributions to the exponent of each $\min(u_k, 1-u_k)$, in the third equality we used that $\{v_1, \dots, v_d\}$ is a partition of $v$, and in the last inequality we set $C = C_3^d$ and $B' = dB$.

%Let's analyze the exponent for a generic term $\min(u_k, 1-u_k)$. From the $m=\sigma(i)$ terms, we have $\prod_{i=1}^d \min(u_{\sigma(i)}, 1-u_{\sigma(i)})^{-1} = \prod_{k=1}^d \min(u_k, 1-u_k)^{-1}$, since $\sigma$ is a permutation of $\{1, \dots, d\}$. This contributes $-1$ to the exponent of each $\min(u_k, 1-u_k)$. From the $v'=v_i$ terms, we have $\prod_{i=1}^d \prod_{k=1}^d \min(u_k, 1-u_k)^{-B - \Indc{k \in v_i}}$. This contributes $\sum_{i=1}^d (-B - \Indc{k \in v_i})$ to the exponent. Combining these, the total exponent for $\min(u_k, 1-u_k)$ is $\left( \sum_{i=1}^d (-B - \Indc{k \in v_i}) \right) - 1 = -dB - \left( \sum_{i=1}^d \Indc{k \in v_i} \right) - 1$.
%We now analyze this exponent based on whether $k \in v$. If $k \notin v$ (un-differentiated variable), then $k \notin v_i$ for all $i$, so $\sum_{i=1}^d \Indc{k \in v_i} = 0$. The exponent is $-dB - 0 - 1 = -dB - 1$. If $k \in v$ (differentiated variable), then $k$ must be in at least one $v_i$, so $\sum_{i=1}^d \Indc{k \in v_i} \ge 1$. The exponent is less than or equal to $-dB - 1 - 1 = -dB - 2$. Letting $B_3 = dB$ (which is arbitrarily small), we can write a single bound covering both cases: the exponent is less than or equal to $-B_3 - 1 - \Indc{k \in v}$. Thus, $|T| \le C_3^d \prod_{k=1}^d \min(u_k, 1-u_k)^{-B_3 - 1 - \Indc{k \in v}}$. 
Since $B$ is arbitrarily small, $B' = dB$ is also arbitrarily small. Thus we obtain the required bound.
\end{proof}

\subsection{Main Convergence Theorem}

We now combine these bounds to prove the main result.

\begin{theorem}[Integrand $h(\bfu)$ satisfies boundary growth condition]
\label{thm:main_convergence}
Under Assumptions \ref{assum:f}, \ref{assum:pi}, and \ref{assum:tau}, the integrand $h(\bfu) = (f \circ \bdtau)(\bfu) \cdot (\pi \circ \bdtau)(\bfu) \cdot \abs{\det J_{\bdtau}(\bfu)}$ satisfies boundary growth condition, stated in Assumption \ref{assump: growth}.
\end{theorem}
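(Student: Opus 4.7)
The plan is to combine the three preceding propositions via the generalized Leibniz rule for the derivative of a product of three functions. Since $h$ is the pointwise product of $f\circ\bdtau$, $\pi\circ\bdtau$, and $\abs{\det J_{\bdtau}}$, the partial derivative $\partial^v h$ expands as a finite sum, indexed by ordered partitions of $v$ into three disjoint subsets $v_1 \sqcup v_2 \sqcup v_3 = v$, of terms of the form
\[
\partial^{v_1}(f\circ\bdtau)(\bfu)\cdot\partial^{v_2}(\pi\circ\bdtau)(\bfu)\cdot\partial^{v_3}\bigl(\abs{\det J_{\bdtau}(\bfu)}\bigr).
\]
By the triangle inequality, it suffices to bound a single such generic term and then sum over the finitely many partitions.

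For a fixed partition $(v_1,v_2,v_3)$, I would apply Proposition \ref{prop:h_f} to the first factor, Proposition \ref{prop:h_pi} to the second, and Proposition \ref{prop:h_det} to the third, each with the same (arbitrarily small) parameter $B>0$. Multiplying the three bounds, the exponent of $\min(u_k,1-u_k)$ coming out is
\[
\bigl(-B-\Indc{k\in v_1}\bigr)+\bigl(1-B-\Indc{k\in v_2}\bigr)+\bigl(-1-B-\Indc{k\in v_3}\bigr)=-3B-\Indc{k\in v},
\]
since the constants $1$ and $-1$ cancel and the three indicators add up to $\Indc{k\in v}$ because $\{v_1,v_2,v_3\}$ partitions $v$. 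Relabeling $3B$ as $B$ (still arbitrarily small), each term is bounded by a constant times $\prod_{k=1}^{d}\min(u_k,1-u_k)^{-B-\Indc{k\in v}}$, which is exactly the boundary growth form required by Assumption \ref{assump: growth}.

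There is no real obstacle: the three propositions were designed precisely so that the excess exponents $+1$ from $\pi\circ\bdtau$ and $-1$ from $\abs{\det J_{\bdtau}}$ cancel, leaving the canonical $-B-\Indc{k\in v}$ shape. The only bookkeeping points to be careful about are (i) making sure that the same cumulative parameter $B$ is used consistently across the three propositions before relabeling, (ii) verifying that the number of summands in the Leibniz expansion and the number of partitions depend only on $d$ and $|v|\le d$, so they can be absorbed into the final constant $C$, and (iii) noting that $\min(u_k,1-u_k)\le 1/2$ so that any leftover non-negative exponent (arising, e.g., if one wanted to keep the slack from $\alpha C_5>1$ in Proposition \ref{prop:h_pi}) can only help and can be safely discarded. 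With these remarks in place, the conclusion follows immediately.
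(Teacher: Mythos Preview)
Your proposal is correct and follows essentially the same approach as the paper's own proof: apply the generalized Leibniz rule to the triple product, bound each factor via Propositions~\ref{prop:h_f}, \ref{prop:h_pi}, and \ref{prop:h_det}, observe that the $+1$ and $-1$ exponents cancel while the indicators sum to $\Indc{k\in v}$, and relabel $3B$ as an arbitrarily small parameter.
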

\begin{proof}
We apply the general Leibniz rule to $h(\bfu)$. A generic term $T$ in the expansion of $\partial^v h(\bfu)$ is given by
$$
T = \partial^{v_1}((f \circ \bdtau)(\bfu)) \cdot \partial^{v_2}((\pi \circ \bdtau)(\bfu)) \cdot \partial^{v_3}(\abs{\det J_{\bdtau}(\bfu)})
$$
where $\{v_1, v_2, v_3\}$ is a partition of $v$. We bound $|T|$ by multiplying the bounds from Propositions \ref{prop:h_f}, \ref{prop:h_pi}, and \ref{prop:h_det}:
\begin{align*}
    |T| &\le \left( C \prod_{k=1}^d \min(u_k, 1-u_k)^{-B - \Indc{k \in v_1}} \right) \\
    &\quad \times \left( C \prod_{k=1}^d \min(u_k, 1-u_k)^{1 - B - \Indc{k \in v_2}} \right) \\
    &\quad \times \left( C \prod_{k=1}^d \min(u_k, 1-u_k)^{-1 - B - \Indc{k \in v_3}} \right) \\
    &= C^3 \prod_{k=1}^d \min(u_k, 1-u_k)^{-3B - (\Indc{k \in v_1} + \Indc{k \in v_2} + \Indc{k \in v_3})} \\
    &= \tilde{C} \prod_{k=1}^d \min(u_k, 1-u_k)^{-B' - \Indc{k \in v}},
\end{align*}
where $\tilde{C}$ is a constant and $B' = 3B$. The last equality holds because $\{v_1, v_2, v_3\}$ is a partition of $v$, so for any $k$, $\Indc{k \in v_1} + \Indc{k \in v_2} + \Indc{k \in v_3} = \Indc{k \in v}$.
This final bound matches the form required by boundary growth condition (Assumption \ref{assump: growth}) with an arbitrarily small constant $B'$. Since every term in the Leibniz expansion of $\partial^v h(\bfu)$ satisfies this bound, the function $h(\bfu)$ itself satisfies the condition.
\end{proof}

\begin{corollary}
By Theorem \ref{thm:main_convergence} and the properties of scrambled nets, the RQMC estimator $\hat{\mu}_n = \frac{1}{n} \sum_{i=1}^n h(\bfu_i)$ for the expectation $\mu = \E_{X\sim\pi}[f(X)]$ achieves an RMSE convergence rate of $O(n^{-1+\epsilon})$ for any $\epsilon > 0$.
\end{corollary}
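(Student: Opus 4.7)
The plan is essentially to chain two results that are already in place, since the analytic heavy lifting has been discharged by Theorem \ref{thm:main_convergence}. I would structure the argument in two short steps: first verify unbiasedness of the estimator, then invoke the scrambled-net convergence theorem recalled in Theorem \ref{thm:owengrowth}.

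For the first step, I would recall the importance-sampling identity derived in Section \ref{subsec: Transport map and importance sampling},
\[
\mu = \E_{X \sim \pi}[f(X)] = \E_{\mathbf{U} \sim \mathrm{Uniform}((0,1)^d)}[h(\mathbf{U})].
\]
Because a scrambled $(t,m,s)$-net produces points whose marginals are each uniform on $(0,1)^d$, this identity immediately gives $\E[\hat{\mu}_n] = \mu$; hence $\hat{\mu}_n$ is unbiased and its RMSE coincides with $\sqrt{\mathrm{Var}(\hat{\mu}_n)}$. For the second step, I would apply Theorem \ref{thm:main_convergence}, which certifies that the importance-sampled integrand $h$ satisfies the boundary growth condition of Assumption \ref{assump: growth}. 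With this regularity in hand, Theorem \ref{thm:owengrowth} (adapted from Theorem 5.7 of \cite{Owen2006}) applies verbatim and yields
\[
\sqrt{\E\bigl[(\hat{\mu}_n - \mu)^2\bigr]} = \mathcal{O}(n^{-1+\epsilon})
\]
for any $\epsilon > 0$, which is exactly the assertion of the corollary.

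There is no real obstacle remaining: the substantive analytic content has been entirely absorbed into Propositions \ref{prop:h_f}, \ref{prop:h_pi}, \ref{prop:h_det} and the combination Theorem \ref{thm:main_convergence}. The only subtlety worth emphasizing in the write-up is that $\hat{\mu}_n$ must be interpreted as the scrambled-net estimator rather than a deterministic QMC estimator, since the integrand $h$ is generally unbounded near the boundary of the cube and hence has infinite Hardy--Krause variation, rendering the Koksma--Hlawka bound \eqref{equ: kh inequality} useless in this setting. This clarification ensures that the hypotheses of Theorem \ref{thm:owengrowth} are indeed met by the particular randomization scheme chosen for the point set $\{\mathbf{u}_i\}_{i=1}^n$.
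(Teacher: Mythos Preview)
Your proposal is correct and follows the same route as the paper: the corollary is stated without an explicit proof environment, since it is an immediate consequence of chaining Theorem \ref{thm:main_convergence} (which establishes the boundary growth condition for $h$) with Theorem \ref{thm:owengrowth} (Owen's scrambled-net RMSE bound). Your additional remarks on unbiasedness and the failure of the Koksma--Hlawka route are sound elaborations but not required, as the paper simply takes the corollary as self-evident from the two cited results.
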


\section{Growth Condition Verification for Flow Matching Transport Map}
\label{sec:flow_matching_growth}

In this section, we provide a detailed theoretical justification for the transport map constructed via Flow Matching. We prove it satisfies Assumption \ref{assum:tau} in Section~\ref{sec:convergence}. Thus, when combined with integrands satisfying Assumption~\ref{assum:f} and target density $\pi$ satisfying Assumption~\ref{assum:pi}, the resulting composite function meets boundary growth condition, ensuring the desired convergence rates for our FM-ISQMC estimator.

\subsection{Setup and Definitions}

We first recall the setting in Section~\ref{sec:transport_map_flow_matching}. Consider the transport map $\bdtau: (0,1)^d \to \mathbb{R}^d$ constructed as the composition of a base transformation $G$ and a discretized flow map.

\paragraph{Base Transformation}
The base transformation $G$ is a bijection from $(0, 1)^d$ to $\mathbb{R}^d$. It applies the same univariate function $G : (0, 1) \to \mathbb{R}$ to all the $d$ components. For simplicity of notation, we define
$$
G(\mathbf{u}) = (G(u_1), \dots, G(u_d))^\top.
$$
In this work, we choose $G$ to be the inverse logistic function, also known as the logit function, defined as $G(u) = \log\left(\frac{u}{1-u}\right)$. Since $G$ is differentiable and monotonically increasing, its inverse function is well defined and corresponds to the CDF of a distribution supported on $\mathbb{R}$. For this reason, we write $G = F^{-1}$, where $F$ is the CDF of the logistic distribution on $\mathbb{R}$, given by $F(x) = (1+e^{-x})^{-1}$. The derivatives of $G$ satisfy the following growth condition
\begin{equation} \label{eq:g_growth}
    |G^{(k)}(u)| \le C_k \cdot [\min(u, 1-u)]^{-k}, \quad \forall k \ge 1.
\end{equation}

\paragraph{Flow Matching Discretization}
The flow matching ODE is given by $\dot{\mathbf{x}} = \bdv_\theta(\mathbf{x}, t)$. We consider the Forward Euler discretization with $N$ steps of size $h = 1/N$. Let $t_k = kh$. The step map ${\bdtau}^k: \mathbb{R}^d \to \mathbb{R}^d$ is defined as ${\bdtau}^k(\mathbf{x}) = \mathbf{x} + h \bdv_\theta(\mathbf{x}, t_k)$. The total transport map $\bdtau$ is the composition $\bdtau(\mathbf{u}) = {\bdtau}^{N-1} \circ \dots \circ {\bdtau}^0 \circ G(\mathbf{u})$. We denote the composition of the base map and the first $k$ steps as ${\bdtau}^{0:k} = {\bdtau}^{k-1} \circ \dots \circ {\bdtau}^0 \circ G$, with ${\bdtau}^{0:0} = G$.

\paragraph{Assumptions on the Vector Field}
We assume the learned vector field $\bdv_\theta(\mathbf{x}, t)$ is smooth and has bounded derivatives of all orders required for the analysis.
\begin{assumption} \label{assum:v_bounded}
    For any multi-index $\alpha \in \mathbb{N}_0^{d+1}$, including the case $\alpha=0$, there exists a constant $M_\alpha$ such that
    \begin{equation}
        \sup_{\mathbf{x} \in \mathbb{R}^d, t \in [0,1]} |\partial^\alpha \bdv_\theta(\mathbf{x}, t)| \le M_\alpha.
    \end{equation}
\end{assumption}
This assumption requires that both the vector field $\bdv_\theta$ itself and its derivatives are uniformly bounded.

\begin{remark}
    Standard Multi-Layer Perceptrons (MLPs) with common activation functions can satisfy Assumption \ref{assum:v_bounded}. Consider an MLP $f(\mathbf{x}) = W_L \sigma(W_{L-1} \dots \sigma(W_1 \mathbf{x}) \dots)$. If the activation function $\sigma$ is bounded and has bounded derivatives of all orders, such as Sigmoid or Tanh, then the entire network and its derivatives are bounded, satisfying the assumption. If $\sigma$ is unbounded but has bounded derivatives, such as Softplus, SiLU, or GELU, the network output may grow linearly. However, for the derivatives with $\alpha \ge 1$, the chain rule involves $\sigma'$, which is bounded. Thus, the derivatives of the network are bounded. Note that while Assumption \ref{assum:v_bounded} strictly requires $\bdv_\theta$ to be bounded for $\alpha=0$, for the growth condition proof below, we primarily rely on the boundedness of the derivatives for $\alpha \ge 1$. The boundedness of $\bdv_\theta$ itself is useful to ensure the trajectory does not escape to infinity in finite time, but the critical part for the singularity analysis is the control of the derivatives.
\end{remark}

\iffalse
\begin{remark}[On Time Singularities]
    We acknowledge that Assumption \ref{assum:v_bounded} is a strong condition. Recent theoretical work \cite{gao2024convergence} indicates that for certain target distributions, the optimal velocity field in flow matching may exhibit singularities as $t \to 1$. In such cases, the uniform boundedness assumption would be violated. However, in practice, we parameterize $\bdv_\theta$ using neural networks with bounded weights and activations (or regularized unbounded activations), which enforces boundedness on the learned field, potentially at the cost of approximation error near the singularity. Our analysis pertains to the properties of the \textit{learned} map under these architectural constraints.
\end{remark}
\fi

\subsection{Proof of Growth Condition}

We first verify the second part of Assumption \ref{assum:tau} regarding value growth by establishing the following theorem.

\begin{theorem}[Value Growth of Flow Matching Map] \label{thm:value_growth}
    Let $\bdtau(\mathbf{u}) = \tilde{{\bdtau}}(G(\mathbf{u}))$ be the transport map, where $G$ is the component-wise inverse logistic map and $\tilde{{\bdtau}}$ is the flow map generated by a vector field $\bdv_\theta$ satisfying Assumption \ref{assum:v_bounded}. Then, the following properties hold.
    \begin{enumerate}
        \item There exists a constant $C_4 > 0$ such that for all $j \in \{1, \dots, d\}$ and $\mathbf{u} \in (0,1)^d$,
        \begin{equation}
            \min((F \circ \bdtau)(\mathbf{u})_j, 1 - (F \circ \bdtau)(\mathbf{u})_j)^{-1} \le C_4 \min(u_j, 1 - u_j)^{-1}.
        \end{equation}
        This implies that the inequality \eqref{eq:tau_inv_growth} of Assumption \ref{assum:tau} holds with parameters $B_0 = 1$.
        \item The map satisfies the logarithmic lower bound growth condition. Specifically, there exists a constant $C'$ such that
        \begin{equation}
            |{\bdtau}(\mathbf{u})_j| \ge |\ln(\min(u_j, 1-u_j))| - C'.
        \end{equation}
        This implies that the inequality \eqref{eq:tau_lower_bound} of Assumption \ref{assum:tau} holds with parameters $C_5 = 1$.
    \end{enumerate}
\end{theorem}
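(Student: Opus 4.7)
The plan is to exploit the structural decomposition $\bdtau(\mathbf{u}) = \tilde{\bdtau}(G(\mathbf{u}))$ together with a uniform displacement bound for the Euler-discretized flow $\tilde{\bdtau}$. The key observation is that, if $M_0 := \sup_{\mathbf{x},t}|\bdv_\theta(\mathbf{x},t)|$ (the $\alpha = 0$ case of Assumption \ref{assum:v_bounded}), then for every $\mathbf{x} \in \mathbb{R}^d$ and every coordinate $j$,
\[
    |\tilde{\bdtau}(\mathbf{x})_j - \mathbf{x}_j| \le \sum_{k=0}^{N-1} h\, \bigl|[\bdv_\theta(\mathbf{x}^{(k)}, t_k)]_j\bigr| \le N h M_0 = M_0,
\]
since $Nh = 1$. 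Hence $|\bdtau(\mathbf{u})_j - G(u_j)| \le M_0$ for every $\mathbf{u} \in (0,1)^d$: the neural Euler flow is an $O(1)$ perturbation of the logit base map, uniformly in the number of integration steps $N$. Once this is in hand, both claims reduce to elementary computations on $G$ and the logistic CDF $F$.

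For part 2, I would compute directly that $|G(u_j)| = |\ln u_j - \ln(1-u_j)| \ge |\ln \min(u_j, 1-u_j)| - \ln 2$, because whichever of $\ln u_j$, $\ln(1-u_j)$ dominates in magnitude equals $-\ln \min(u_j,1-u_j)$, while the other has absolute value at most $\ln 2$. Then the reverse triangle inequality yields $|\bdtau(\mathbf{u})_j| \ge |G(u_j)| - M_0 \ge |\ln \min(u_j,1-u_j)| - (M_0 + \ln 2)$, which is the stated bound with $C' = M_0 + \ln 2$ and coefficient $C_5 = 1$.

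For part 1, I would first record the elementary two-sided tail estimate $\min(F(y), 1-F(y)) \ge \tfrac{1}{2} e^{-|y|}$ for the logistic CDF, which immediately gives $\min(F(y), 1-F(y))^{-1} \le 2 e^{|y|}$. Setting $y = \bdtau(\mathbf{u})_j$ and using the displacement bound yields $e^{|y|} \le e^{M_0}\, e^{|G(u_j)|}$, and a direct calculation shows $e^{|G(u_j)|} = \max(u_j,1-u_j)/\min(u_j,1-u_j) \le \min(u_j,1-u_j)^{-1}$. Combining these factors produces the constant $C_4 = 2 e^{M_0}$ together with the sharper coordinatewise bound stated in the theorem. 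This is strictly stronger than the product form of \eqref{eq:tau_inv_growth}, which it implies trivially since $\min(u_l,1-u_l)^{-1} \ge 1$ for every $l$, giving $B_0 = 1$.

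There is no substantial obstacle in this proof; the argument is fundamentally a one-line observation about Euler integration wrapped around two short logistic computations. The only subtlety worth flagging is that the uniform displacement bound requires pointwise boundedness of $\bdv_\theta$ itself, not merely of its derivatives. This is precisely why Assumption \ref{assum:v_bounded} is formulated to include the multi-index $\alpha = 0$, and it is the unique place in this theorem where the zeroth-order bound on the velocity field plays an essential role; the companion derivative growth theorem, by contrast, will rely on the bounds for $|\alpha| \ge 1$.
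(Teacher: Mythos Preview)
Your proposal is correct and follows essentially the same route as the paper: both arguments rest on the uniform displacement bound $|\bdtau(\mathbf{u})_j - G(u_j)| \le M_0$ from the Euler discretization of a bounded velocity field, and then reduce both claims to elementary logistic computations, obtaining the identical constant $C' = M_0 + \ln 2$ for part~2. The only cosmetic difference is in part~1: the paper bounds $F(\bdtau_j) \ge e^{-M_0} u_j$ and $1-F(\bdtau_j) \ge e^{-M_0}(1-u_j)$ separately via monotonicity of $F$ (yielding $C_4 = e^{M_0}$), whereas you go through the single tail estimate $\min(F(y),1-F(y)) \ge \tfrac{1}{2}e^{-|y|}$ (yielding $C_4 = 2e^{M_0}$); both are valid and equivalent in spirit.
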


\begin{proof}
    Since $\bdv_\theta$ is bounded, there exists $M > 0$ such that $|\tilde{{\bdtau}}(\mathbf{x}) - \mathbf{x}|_\infty \le M$. Let $x_j = G(\mathbf{u})_j$. Then $x_j - M \le {\bdtau}_j(\mathbf{u}) \le x_j + M$.
    
    We first establish the inverse boundary control. Using the monotonicity of $F$ and the identity $e^{-x_j} = (1-u_j)/u_j$, we have
    $$
    F({\bdtau}_j) \ge F(x_j - M) = \frac{1}{1 + e^{-x_j}e^M} = \frac{u_j}{u_j + (1-u_j)e^M} \ge e^{-M} u_j.
    $$
    Similarly, for the upper tail, we obtain
    $$
    1 - F({\bdtau}_j) \ge 1 - F(x_j + M) = \frac{e^{-x_j}e^{-M}}{1 + e^{-x_j}e^{-M}} = \frac{(1-u_j)e^{-M}}{u_j + (1-u_j)e^{-M}} \ge e^{-M}(1-u_j).
    $$
    Combining these yields $\min(F({\bdtau}_j), 1-F({\bdtau}_j)) \ge e^{-M} \min(u_j, 1-u_j)$, which implies the result with $C_4=e^M$ and $B=1$.

    Next, we derive the logarithmic lower bound growth. We have $|{\bdtau}_j(\mathbf{u})| \ge |x_j| - M = |G(\mathbf{u})_j| - M$.
    Recall $G(\mathbf{u})_j = \log\left(\frac{u_j}{1-u_j}\right)$.
    We observe that $|\log\left(\frac{u}{1-u}\right)| \ge |\ln(\min(u, 1-u))| - \ln 2$.
    To see this, consider $u \le 1/2$. Then $\min(u, 1-u) = u$.
    $\log\left(\frac{u}{1-u}\right) = \ln u - \ln(1-u)$. Since $1/2 \le 1-u < 1$, we have $-\ln 2 \le \ln(1-u) < 0$.
    Thus $\log\left(\frac{u}{1-u}\right) \in [\ln u, \ln u + \ln 2]$. Since $\ln u$ is negative and large, $|\log\left(\frac{u}{1-u}\right)| \ge |\ln u| - \ln 2$.
    The case $u > 1/2$ is symmetric.
    Therefore,
    $$
    |{\bdtau}_j(\mathbf{u})| \ge |\ln(\min(u_j, 1-u_j))| - (M + \ln 2).
    $$
    This directly verifies the condition with $C_5 = 1$ and $C' = M + \ln 2$.
\end{proof}

Next, we establish the derivative growth condition for $\bdtau$.

\begin{theorem}[Derivative Growth of Flow Matching Map] \label{thm:derivative_growth}
    Let $\bdtau$ be the transport map constructed as the composition of the base map $G$ and $N$ Euler steps with a vector field $\bdv_\theta$ satisfying Assumption \ref{assum:v_bounded}. Then, for any $m \in \{1, \dots, d\}$ and any set of indices $v \subseteq \{1, \dots, d\}$, and for arbitrarily small $B > 0$, there exists a constant $C_3 > 0$ such that the partial derivatives of $\bdtau$ satisfy the inequality
    \begin{equation} \label{eq:target_condition}
        \abs{\partial^{v}(\partial^{\{m\}}{\bdtau}_j)(\mathbf{u})} \le C_3 \cdot \min(u_m, 1-u_m)^{-1} \cdot \prod_{k=1}^d \min(u_k, 1-u_k)^{-B - \Indc{k \in v}}.
    \end{equation}
\end{theorem}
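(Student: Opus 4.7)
The plan is to exploit the decomposition $\bdtau = \Phi \circ G$, where $\Phi := {\bdtau}^{N-1} \circ \dots \circ {\bdtau}^{0}$ is the composition of the $N$ Euler steps (without the base map), regarded as a smooth map $\mathbb{R}^d \to \mathbb{R}^d$, and $G$ is the component-wise logit map. The key observation is that all boundary singularity is concentrated in $G$ while $\Phi$ has uniformly bounded partial derivatives of all orders; moreover, because $G$ is diagonal, every derivative in $u_k$ applied to $\bdtau_j$ must pass through the $k$-th slot of $G$, contributing exactly one factor $G^{(i)}(u_k)$ whose blowup is controlled by~\eqref{eq:g_growth}.

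The first step is an auxiliary lemma: for every multi-index $\mu \in \mathbb{N}_0^d$, $\sup_{\mathbf{x}\in\mathbb{R}^d}|\partial^\mu \Phi_i(\mathbf{x})| < \infty$. I would prove this by induction on the number of Euler steps. The base case (the identity map) is trivial. For the inductive step, writing $\Phi^{k+1} = (\mathrm{Id} + h\,\bdv_\theta(\cdot, t_k)) \circ \Phi^{k}$, the multivariate Fa\`a di Bruno formula (Theorem~\ref{thm:faa_di_bruno}) expresses $\partial^\mu \Phi^{k+1}_i$ as a finite sum in which every factor is either a bounded derivative of $\bdv_\theta$ (Assumption~\ref{assum:v_bounded}) or a bounded derivative of $\Phi^{k}$ (induction hypothesis), hence is bounded.

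The second step is to compute $\partial^v \partial^{\{m\}} \bdtau_j$. Let $\lambda \in \mathbb{N}_0^d$ be the multi-index with $\lambda_k = \Indc{k \in v} + \Indc{k = m}$, so that the operator $\partial^v \partial^{\{m\}}$ coincides with the ordinary multi-index derivative $\partial^\lambda$ (this correctly accounts for the case $m \in v$, in which $\lambda_m = 2$). Since $G$ is diagonal, $\bdtau_j(\mathbf{u}) = \Phi_j(G(u_1),\dots,G(u_d))$, and iterating the univariate Fa\`a di Bruno one coordinate at a time expresses $\partial^\lambda \bdtau_j(\mathbf{u})$ as a finite sum of terms of the form
$$
(\partial^\mu \Phi_j)(G(\mathbf{u})) \cdot \prod_{k=1}^{d} \prod_{s=1}^{r_k} G^{(i_{k,s})}(u_k),
$$
where, for each $k$, the positive integers $i_{k,1},\dots,i_{k,r_k}$ form an unordered partition of $\lambda_k$ and $\mu_k = r_k \le \lambda_k$. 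Bounding $(\partial^\mu \Phi_j)(G(\mathbf{u}))$ by the auxiliary lemma and each $|G^{(i)}(u_k)|$ by $C_i \min(u_k,1-u_k)^{-i}$ via~\eqref{eq:g_growth}, the total derivative order of $G$ acting at $u_k$ is exactly $\lambda_k$, so every term is dominated by a constant multiple of $\prod_{k=1}^d \min(u_k,1-u_k)^{-\lambda_k}$. Summing finitely many terms and using $\min(u_k,1-u_k) \le 1$ yields~\eqref{eq:target_condition}; in fact the $B$ slack is unnecessary and the bound holds with $B=0$.

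The main technical care lies in the auxiliary lemma, since Fa\`a di Bruno produces combinatorially many terms as $|\mu|$ and $N$ grow. The induction succeeds because each expansion is \emph{finite} and every factor is uniformly bounded at each step; no local regularity of $\bdv_\theta$ near the boundary of $(0,1)^d$ is invoked, which is consistent with the fact that $\Phi$ lives on all of $\mathbb{R}^d$ and the boundary of the hypercube only enters through $G$. The remaining bookkeeping -- tracking that each $u_k$-derivative passes through the $k$-th slot of $G$ and that the derivative orders at $u_k$ sum to $\lambda_k$ -- is a direct consequence of the diagonal structure of $G$.
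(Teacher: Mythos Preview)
Your proof is correct, and in fact yields the bound with $B=0$, as you note. However, your route is genuinely different from the paper's. The paper argues by induction on the number of Euler steps applied to the \emph{full} composite map $\bdtau^{0:k} = \bdtau^{k-1}\circ\dots\circ\bdtau^0\circ G$, proving at each stage that $\bdtau^{0:k}$ itself satisfies the growth bound~\eqref{eq:target_condition}; the inductive step expands $\partial^v\partial^{\{m\}}(\bdtau^k\circ\bdtau^{0:k})$ via chain and Leibniz rules and feeds in the induction hypothesis on $\bdtau^{0:k}$ together with the boundedness of the derivatives of $\bdv_\theta$. By contrast, you first strip off $G$ entirely and prove the clean auxiliary statement that $\Phi = \bdtau^{N-1}\circ\dots\circ\bdtau^0:\R^d\to\R^d$ has uniformly bounded partial derivatives of all orders, and only then perform a single Fa\`a di Bruno expansion of $\Phi\circ G$, exploiting the diagonal structure of $G$ to read off the exact exponent $\lambda_k = \Indc{k\in v}+\Indc{k=m}$ at each coordinate.

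Your decomposition is cleaner and more modular: it isolates the source of the boundary singularity (entirely in $G$) from the smooth part ($\Phi$), and the auxiliary lemma is a standalone fact about compositions of maps with bounded derivatives. It also makes the sharpness ($B=0$) transparent. The paper's approach, on the other hand, carries the full growth bound through each step; this is slightly heavier bookkeeping here, but would adapt more directly if the step maps $\bdtau^k$ themselves had unbounded derivatives satisfying some controlled growth, or if the base map were not diagonal, since it never relies on cleanly separating the bounded and singular parts.
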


\begin{proof}
We prove that for any $0 \le k \le N$, the map ${\bdtau}^{0:k}$ satisfies the growth condition \eqref{eq:target_condition}. The proof proceeds by induction on $k$.

We begin with the base case $k=0$. Let ${\bdtau}^{0:0} = G$. We check if ${\bdtau}^{0:0}$ satisfies the condition. Since $G$ is diagonal, $({\bdtau}^{0:0})_j(\mathbf{u}) = G(u_j)$. The derivative $\partial^{\{m\}} ({\bdtau}^{0:0})_j$ is non-zero only if $j=m$, in which case it equals $G'(u_m)$. For a general multi-index $v$, the term $\partial^v (\partial^{\{m\}} ({\bdtau}^{0:0})_m)(\mathbf{u})$ is non-zero only if $v \subseteq \{m\}$. If $v=\emptyset$, the term is $G'(u_m)$, bounded by $C_3 \min(u_m, 1-u_m)^{-1}$. If $v=\{m\}$, the term is $G''(u_m)$, bounded by $C_3 \min(u_m, 1-u_m)^{-2}$. In both cases, the bound $|\partial^v \partial^{\{m\}} ({\bdtau}^{0:0})_j| \le C_3 \min(u_m, 1-u_m)^{-1} \prod_{k \in v} \min(u_k, 1-u_k)^{-1}$ holds, satisfying the condition with $B=0$.

For the inductive step, assume that the map ${\bdtau}^{0:k}$ satisfies the growth condition \eqref{eq:target_condition}. Consider the next step map ${\bdtau}^k(\mathbf{x}) = \mathbf{x} + h \bdv_\theta(\mathbf{x}, t_k)$. We want to show that ${\bdtau}^{0:k+1} = {\bdtau}^k \circ {\bdtau}^{0:k}$ also satisfies the condition. The $j$-th component is $({\bdtau}^{0:k+1})_j(\mathbf{u}) = ({\bdtau}^{0:k})_j(\mathbf{u}) + h v_{\theta, j}({\bdtau}^{0:k}(\mathbf{u}), t_k)$. Differentiating with respect to $u_m$ and then by $\partial^v$, we get $\partial^v \partial^{\{m\}} ({\bdtau}^{0:k+1})_j = \partial^v \partial^{\{m\}} ({\bdtau}^{0:k})_j + h \partial^v \partial^{\{m\}} [v_{\theta, j}({\bdtau}^{0:k}(\mathbf{u}), t_k)]$. The first term $\partial^v \partial^{\{m\}} ({\bdtau}^{0:k})_j$ satisfies the bound by the induction hypothesis. We need to analyze the second term.

We analyze the term $\partial^v \partial^{\{m\}} (v_{\theta, j} \circ {\bdtau}^{0:k})$ by first applying the chain rule for $\partial^{\{m\}}$ and then the Leibniz rule for $\partial^v$.
$$
\partial^{\{m\}} (v_{\theta, j} \circ {\bdtau}^{0:k}) = \sum_{p=1}^d (\partial^{\{p\}} v_{\theta, j}) \circ {\bdtau}^{0:k} \cdot \partial^{\{m\}} ({\bdtau}^{0:k})_p.
$$
Applying $\partial^v$ to this product yields a sum over $p$ and over partitions $v_A \uplus v_B = v$:
$$
\partial^v \partial^{\{m\}} (v_{\theta, j} \circ {\bdtau}^{0:k}) = \sum_{p=1}^d \sum_{v_A \subseteq v} \partial^{v_A} [(\partial^{\{p\}} v_{\theta, j}) \circ {\bdtau}^{0:k}] \cdot \partial^{v_B} [\partial^{\{m\}} ({\bdtau}^{0:k})_p].
$$
For the first factor $\partial^{v_A} [(\partial^{\{p\}} v_{\theta, j}) \circ {\bdtau}^{0:k}]$, we apply the multivariate Fa\`a di Bruno formula. Since $\bdv_\theta$ and its derivatives are bounded, and using the induction hypothesis (treating any index in $v_A$ as the primary index $m$ leads to a bound $\prod_{k \in v_A} \min(u_k, 1-u_k)^{-1-B}$), this factor is bounded by $C \prod_{k \in v_A} \min(u_k, 1-u_k)^{-1-B}$.
For the second factor $\partial^{v_B} [\partial^{\{m\}} ({\bdtau}^{0:k})_p]$, the induction hypothesis directly gives the bound $C \min(u_m, 1-u_m)^{-1} \prod_{k \in v_B} \min(u_k, 1-u_k)^{-1-B}$.
Multiplying these bounds, and using $v_A \uplus v_B = v$, we obtain
$$
\left| \partial^v \partial^{\{m\}} (v_{\theta, j} \circ {\bdtau}^{0:k}) \right| \le \tilde{C} \min(u_m, 1-u_m)^{-1} \prod_{k \in v} \min(u_k, 1-u_k)^{-1-B}.
$$
This matches the required growth condition. Since $v_{\theta, j}$ and its derivatives are uniformly bounded, they do not alter the growth rate. Therefore, ${\bdtau}^{0:k+1}$ satisfies the same growth condition as ${\bdtau}^{0:k}$.

By induction, we conclude that ${\bdtau}^{0:k}$ satisfies the growth condition for all $0 \le k \le N$. Since $\bdtau = {\bdtau}^{0:N}$, the final transport map satisfies Assumption \ref{assum:tau}.
\end{proof}

\iffalse
\begin{remark}[Dependence on Constants]
    The constant $C_3$ in Theorem \ref{thm:derivative_growth} implicitly depends on the number of Euler steps $N$ and the dimension $d$. Specifically, the repeated application of the chain rule in the inductive step suggests that $C_3$ may grow exponentially with $N$ in the worst case. However, since each step is a perturbation of the identity scaled by the step size, the effective growth is controlled. A fully explicit tracking of these constants is beyond the scope of this work but is an important direction for future refinement.
\end{remark}
\fi

\subsection{Convergence Analysis of the FM-ISQMC Estimator}

We analyze the convergence of the numerical integration using the constructed transport map $\bdtau$. The goal is to estimate the expectation $\mu = \E_{X\sim\pi}[f(X)]$. We employ the FM-ISQMC estimator defined as
\begin{equation}
    \hat{I}_n = \frac{1}{n} \sum_{i=1}^n h(\mathbf{u}_i) = \frac{1}{n} \sum_{i=1}^n (f \circ \bdtau)(\mathbf{u}_i) \cdot (\pi \circ \bdtau)(\mathbf{u}_i) \cdot \abs{\det J_{\bdtau}(\mathbf{u}_i)},
\end{equation}
where $\{\mathbf{u}_i\}_{i=1}^n$ are scrambled net points.

\begin{theorem}[Convergence Rate of FM-ISQMC] \label{thm:convergence_rate}
    Suppose the vector field $\bdv_\theta$ satisfies Assumption \ref{assum:v_bounded}, the integrand $f$ satisfies Assumption \ref{assum:f} , and the target density $\pi$ satisfies Assumption \ref{assum:pi}. 
    Then, the integrand $h(\mathbf{u})$ satisfies  boundary growth condition (Assumption \ref{assump: growth}). Consequently, the FM-ISQMC estimator for the expectation $\mu = \E_{X\sim\pi}[f(X)]$ achieves an RMSE convergence rate of $O(n^{-1+\epsilon})$ for any $\epsilon > 0$.
\end{theorem}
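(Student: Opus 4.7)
The proof is essentially an assembly of three previously proved pieces, so the plan is largely bookkeeping. First, I would verify that the Flow Matching transport map $\bdtau = {\bdtau}^{N-1} \circ \cdots \circ {\bdtau}^0 \circ G$ satisfies Assumption \ref{assum:tau} in full. The derivative growth inequality \eqref{term:tau_deriv} is exactly the content of Theorem \ref{thm:derivative_growth}, while the inverse-CDF growth \eqref{eq:tau_inv_growth} and the logarithmic lower bound \eqref{eq:tau_lower_bound} are the two parts of Theorem \ref{thm:value_growth}, with explicit constants $B_0 = 1$ and $C_5 = 1$. Both rely on Assumption \ref{assum:v_bounded}, which is hypothesized.

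Second, with Assumption \ref{assum:tau} now in hand, together with Assumptions \ref{assum:f} and \ref{assum:pi}, I would invoke Theorem \ref{thm:main_convergence} verbatim. That theorem already decomposes the ISQMC integrand $h(\mathbf{u}) = (f \circ \bdtau)(\mathbf{u}) \cdot (\pi \circ \bdtau)(\mathbf{u}) \cdot |\det J_{\bdtau}(\mathbf{u})|$ via Leibniz into three factors whose individual boundary bounds are given by Propositions \ref{prop:h_f}, \ref{prop:h_pi}, and \ref{prop:h_det}, and shows that the exponents combine into $-B' - \Indc{k \in v}$ with $B'$ arbitrarily small. No new calculation is required at this step; it simply specializes the general transport-map convergence theorem to the FM-generated $\bdtau$.

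Third, Theorem \ref{thm:owengrowth} (the Owen scrambled-net result) converts the boundary growth condition on $h$ into the RMSE bound $O(n^{-1+\epsilon})$. Unbiasedness of $\hat{I}_n$ is not something I need to re-derive: it was recorded in Section \ref{subsec: Transport map and importance sampling}, since the scrambled-net points $\{\mathbf{u}_i\}$ are marginally uniform on $[0,1]^d$ and the change-of-variables identity exactly cancels the Jacobian, so RMSE coincides with the standard deviation controlled by Theorem \ref{thm:owengrowth}.

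The only substantive care-point — and the place I would flag as the main obstacle — is matching constants between Theorem \ref{thm:value_growth} and Assumption \ref{assum:tau}. Theorem \ref{thm:value_growth} yields $C_5 = 1$, while Proposition \ref{prop:h_pi} uses the strict inequality $\alpha C_5 > 1$ to turn the exponential decay of $\pi$ into a boundary factor with exponent exceeding $1$. With the logit base map this forces the decay rate in Assumption \ref{assum:pi} to satisfy $\alpha > 1$; alternatively, one can enlarge $C_5$ by replacing $G$ with a heavier-tailed base transformation, or relax $\min(u_k,1-u_k)^{1-B-\Indc{k\in v}}$ in Proposition \ref{prop:h_pi} to $\min(u_k,1-u_k)^{\alpha C_5 - B - \Indc{k\in v}}$ and carry the resulting positive exponent through the rest of the argument. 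In any of these variants the conclusion is unchanged, and the theorem follows directly from the chain Theorems \ref{thm:value_growth}, \ref{thm:derivative_growth} $\Rightarrow$ Assumption \ref{assum:tau} $\Rightarrow$ Theorem \ref{thm:main_convergence} $\Rightarrow$ Theorem \ref{thm:owengrowth}.
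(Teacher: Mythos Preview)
Your proposal is correct and follows essentially the same three-step chain as the paper's own proof: Theorems \ref{thm:value_growth} and \ref{thm:derivative_growth} give Assumption \ref{assum:tau}, then Theorem \ref{thm:main_convergence} gives the boundary growth condition on $h$, then Theorem \ref{thm:owengrowth} gives the RMSE rate. Your additional observation that $C_5 = 1$ from Theorem \ref{thm:value_growth} forces $\alpha > 1$ in Assumption \ref{assum:pi} (via the requirement $C_5 > 1/\alpha$ in Assumption \ref{assum:tau}) is a genuine constraint that the paper's proof passes over silently, so you have in fact been slightly more careful than the original.
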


\begin{proof}
    First, by Theorem~\ref{thm:value_growth} and Theorem~\ref{thm:derivative_growth}, the condition that $\bdv_\theta$ satisfies Assumption~\ref{assum:v_bounded} implies that the transport map $\bdtau$ satisfies the  Assumption~\ref{assum:tau}.
    Next, since $f$ satisfies Assumption~\ref{assum:f}, $\pi$ satisfies Assumption~\ref{assum:pi}, and $\bdtau$ satisfies Assumption \ref{assum:tau}, we can use Theorem~\ref{thm:main_convergence} from Section~\ref{sec:convergence}. This theorem guarantees that the importance sampling integrand $h(\mathbf{u})$ satisfies boundary growth condition (Assumption~\ref{assump: growth}).
    
    Finally, applying the Theorem~\ref{thm:owengrowth} for scrambled nets on functions satisfying boundary growth condition, we conclude that the RMSE of the estimator $\hat{I}_n$ is $O(n^{-1+\epsilon})$.
\end{proof}

\section{Experimental results}
\label{sec:numerics}

In our numerical experiments we investigate the performance of flow-based transport maps combined with randomized quasi-Monte Carlo (RQMC) sampling. Throughout, we use scrambled Sobol' point sets in $[0,1)^d$ as the underlying RQMC design; these points are mapped to a simple factorized logistic base distribution $p_0$ on $\R^d$ via its inverse CDF and then transported to the target distribution $\pi$ by the fitted flow. The velocity field $v_\theta(x,t)$ is parameterized by a residual fully connected network that combines a linear embedding of the state $x$, a Fourier feature embedding of time $t$, several residual MLP blocks with layer normalization and Mish activations, and a final linear projection to $\R^d$. Given the learned transport map $\tau$ (obtained by numerically integrating the probability-flow ODE from $t=0$ to $t=1$ starting from $p_0$), we estimate for each coordinate $j=1,\dots,d$ the first and second moment $\E_{X\sim\pi}[X_j]$ and $\E_{X\sim\pi}[X_j^2]$. Let $\{\xi_i^{(S)}\}_{i=1}^N$ denote base samples drawn either independently from $p_0$ ($S=\mathrm{MC}$) or from a scrambled Sobol' sequence mapped to $\R^d$ through the logistic inverse CDF ($S=\mathrm{QMC}$), and set $x_i^{(S)} = \tau(\xi_i^{(S)})$. The flow-matching Monte Carlo and RQMC estimators (FM-MC and FM-QMC) are then given by
\[
  \widehat{I}_N^{\mathrm{FM}\text{-}S}(f)
  \;=\;
  \frac{1}{N}\sum_{i=1}^N f\bigl(x_i^{(S)}\bigr),
  \qquad S\in\{\mathrm{MC},\mathrm{QMC}\},
\]
while the corresponding importance sampling variants (FM-ISMC and FM-ISQMC) correct from the proposal $q_\tau$ back to the target via the self-normalized estimator
\[
  \widehat{I}_N^{\mathrm{FM}\text{-}\mathrm{IS}S}(f)
  \;=\;
  \frac{\sum_{i=1}^N w_i^{(S)} f\bigl(x_i^{(S)}\bigr)}
       {\sum_{i=1}^N w_i^{(S)}},
  \qquad
  w_i^{(S)} \;=\; \frac{\pi\bigl(x_i^{(S)}\bigr)}{q_\tau\bigl(x_i^{(S)}\bigr)},
  \quad S\in\{\mathrm{MC},\mathrm{QMC}\}.
\]
We assess the accuracy of these four estimators for the coordinate-wise first and second moments on three representative posterior distributions: a two-dimensional Gaussian mixture, a two-dimensional banana-shaped distribution, and a 30-dimensional Gaussian mixture. Our code is available at \url{https://github.com/yl602019618/QMC_flow_matching}

\subsection{Mixture of Gaussians}
\subsubsection{2D case}
We first consider a two-dimensional mixture of Gaussian distributions as a simple but nontrivial multimodal target. The target density $\pi$ is a four-component Gaussian mixture with equal weights,
\[
  \pi(x)
  \;=\;
  \frac{1}{4}\sum_{k=1}^{4} \mathcal{N}\bigl(x \,\big|\, \mu_k,\Sigma_k\bigr),
  \qquad x\in\mathbb{R}^2,
\]
where the component means (written as column vectors) are
\[
  \mu_1 = (1,1)^\top,\quad
  \mu_2 = (2,3.6)^\top,\quad
  \mu_3 = (3.3,2.8)^\top,\quad
  \mu_4 = (1.1,2.9)^\top,
\]
and the corresponding covariance matrices are obtained by a uniform scaling of $1/4^2$,
\[
  \Sigma_1 = \frac{1}{40^2}
  \begin{pmatrix}
    2   & 0.6\\
    0.6 & 1
  \end{pmatrix},\quad
  \Sigma_2 = \frac{1}{4^2}
  \begin{pmatrix}
    2     & -0.4\\
    -0.4 & 2
  \end{pmatrix},\quad
  \Sigma_3 = \frac{1}{4^2}
  \begin{pmatrix}
    3   & 0.8\\
    0.8 & 2
  \end{pmatrix},\quad
  \Sigma_4 = \frac{1}{4^2}
  \begin{pmatrix}
    3 & 0\\
    0 & 0.5
  \end{pmatrix}.
\]

% We generate $10^4$ i.i.d.\ samples from this mixture and use them to train the flow-matching model described in Section~\ref{sec:numerics}, taking as base distribution an independent logistic law on $\mathbb{R}^2$ with zero location and unit scale. The model is trained for $5\times 10^4$ gradient steps using the Adam optimizer. During training and sampling, trajectories are integrated forward from $t=0$ to $t=1$ using the Heun scheme with $32$ time steps, while density evaluation relies on backward integration of the probability–flow ODE with a fourth-order Runge–Kutta method and exact divergence computation.

Figure~\ref{fig:1} summarizes the quality of the learned transport map. As shown in Figs.~\ref{fig:1}(a)–(b), the learned transport accurately recovers the multimodal structure of the target distribution. Figs.~\ref{fig:1}(c)–(d) compare the true mixture density with the density estimated by the flow-matching model. The estimated density closely follows the analytical Gaussian mixture over the region of interest, indicating that the model provides a faithful approximation of $\pi$ both in sample space and at the level of the underlying density.

We next examine the impact of flow-based transport and RQMC sampling on the accuracy of moment estimation. Using the four estimators defined in Section~\ref{sec:numerics} (FM-MC, FM-QMC, FM-ISMC, and FM-ISQMC), we estimate the first and second moment of the two-dimensional mixture and, for each method and each sample size $N$, compute the root-mean-square error (RMSE) over $10$ independent repetitions. The two Monte Carlo-based methods (FM-MC and FM-ISMC) exhibit the expected $O(N^{-1/2})$ convergence. The FM-QMC estimator initially displays a convergence rate of nearly $O(N^{-1})$, consistent with the enhanced variance reduction provided by RQMC, but its error curve eventually saturates as $N$ increases, reflecting the dominant influence of residual approximation error in the learned transport map. In contrast, FM-ISQMC achieves a convergence rate that is substantially higher than that of the Monte Carlo-based schemes and maintains a steady decrease in log-RMSE across the entire range of sample sizes considered, illustrating that combining flow-based importance sampling with RQMC can effectively exploit both variance reduction and bias correction in this setting.
\begin{figure}[!htbp]
  \centering
  \includegraphics[width=0.75\textwidth]{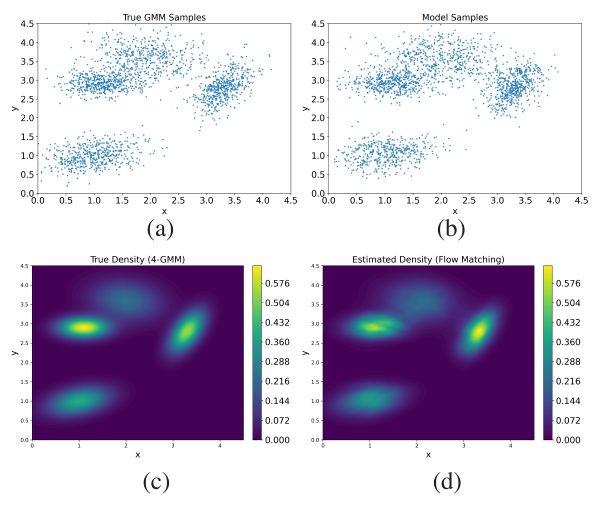}
  \caption{(a) Reference samples from the 2D mixture of Gaussian distribution. (b) Samples generated by the flow-matching model. (c) True density of the 2D GMM distribution. (d) Density estimated by the flow-matching model.}
  \label{fig:1}
\end{figure}

\begin{figure}[!htbp]
  \centering
  \includegraphics[width=0.75\textwidth]{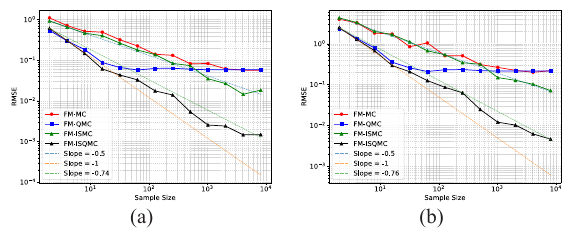}
  \caption{Log-RMSEs for estimating (a) the first and (b) the second moment of the 2D mixture of Gaussian distribution. Each RMSE is computed over 10 independent repetitions.}
  \label{fig:2}
\end{figure}

\subsubsection{30D case}
We next investigate a higher-dimensional example, namely a 30-dimensional mixture of Gaussian distributions with four equally weighted components. The target density $\pi$ is given by
\[
  \pi(x)
  \;=\;
  \frac{1}{4}\sum_{k=1}^{4} \mathcal{N}\bigl(x \,\big|\, m_k, \Sigma\bigr),
  \qquad x\in\mathbb{R}^{30},
\]
where the component means $m_k\in\mathbb{R}^{30}$ are defined by
\[
  m_1 = (-2,-2,0,\dots,0)^\top,\quad
  m_2 = ( 2,-2,0,\dots,0)^\top,\quad
  m_3 = (-2, 2,0,\dots,0)^\top,\quad
  m_4 = ( 2, 2,0,\dots,0)^\top,
\]
and all components share the same diagonal covariance matrix
\[
  \Sigma = 0.5\,I_{30}.
\]
Thus, the multimodality is concentrated in the first two coordinates, while the remaining $28$ coordinates are independent Gaussian directions with identical marginal variance. The flow-matching model is trained on i.i.d.\ samples from this mixture using the same architecture as in the two-dimensional case, but with input dimension $d=30$ and a slightly deeper residual network; again, an independent logistic distribution with zero location and unit scale is used as the base measure, and trajectories are integrated from $t=0$ to $t=1$ with the Heun scheme.

Because direct visualization in 30 dimensions is not possible, we assess the generative quality of the learned transport via a principal component analysis (PCA). Figure~\ref{fig:5}(a) shows a scatter plot of the first two principal components computed from reference samples drawn directly from the 30-dimensional mixture, while Fig.~\ref{fig:5}(b) displays the projection of samples generated by the flow-matching model onto the same PCA basis. The two point clouds are nearly indistinguishable, with four clearly separated clusters aligned with the projected component means, indicating that the learned transport map captures the dominant low-dimensional structure of the high-dimensional target distribution and preserves the multimodal geometry under linear projection.

We also study the effect of flow-based transport and RQMC sampling on moment estimation in this higher-dimensional setting. As in the two-dimensional example, we employ the four estimators from Section~\ref{sec:numerics} (FM-MC, FM-QMC, FM-ISMC, and FM-ISQMC) to estimate the first and second moment of the 30-dimensional mixture and compute, for each method and each sample size $N$, the root-mean-square error over $10$ independent repetitions. The resulting log-RMSE curves are reported in Fig.~\ref{fig:6}. The two Monte Carlo-based schemes (FM-MC and FM-ISMC) again display the characteristic $O(N^{-1/2})$ decay. The FM-QMC estimator exhibits a noticeably faster error reduction for moderate sample sizes, reflecting the variance reduction afforded by the scrambled Sobol' design, but its convergence rate eventually levels off as $N$ increases, consistent with the increasing influence of residual approximation error in the learned high-dimensional transport. In contrast, FM-ISQMC maintains a substantially lower RMSE than the Monte Carlo-based methods across the entire sample range and exhibits a markedly steeper decay in the log–log plots, demonstrating that the combination of flow-based importance sampling with RQMC remains effective in improving accuracy even in this 30-dimensional multimodal example.

\begin{figure}[!htbp]
  \centering
  \includegraphics[width=0.75\textwidth]{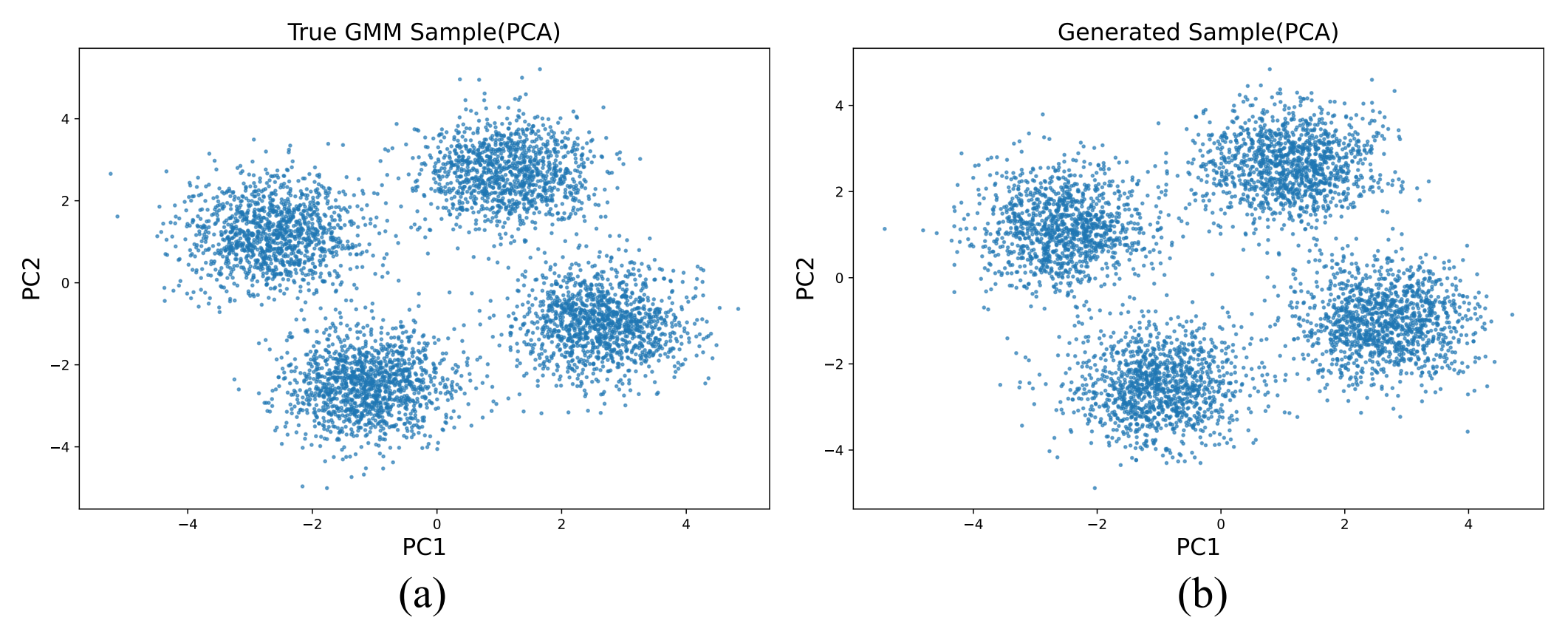}
  \caption{(a) Scatter plot of the first two principal components of samples from the 30-dimensional mixture of Gaussian distribution. (b) Scatter plot of the first two principal components of samples generated by the flow-matching model.}
  \label{fig:5}
\end{figure}

\begin{figure}[!htbp]
  \centering
  \includegraphics[width=0.75\textwidth]{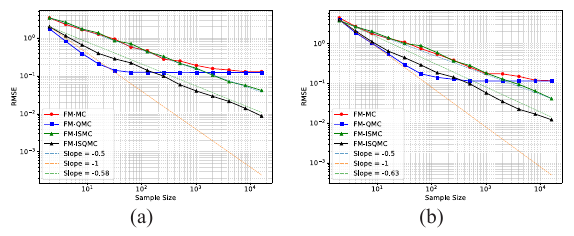}
  \caption{Log-RMSEs for estimating (a) the first and (b) the second moment of the 30-dimensional mixture of Gaussian distribution. Each RMSE is computed over 10 independent repetitions.}
  \label{fig:6}
\end{figure}
\subsection{Banana-shape distribution}
We now consider a two-dimensional banana-shaped distribution, which provides a nonlinear and strongly non-Gaussian benchmark. The target density $\pi$ is defined as the pushforward of a standard bivariate Gaussian under the nonlinear transformation
\[
  z = (z_1,z_2)^\top \sim \mathcal{N}(0, I_2), 
  \qquad
  x_1 = z_1,\qquad
  x_2 = a z_1^2 + c + b z_2,
\]
with parameters $a = 0.3$, $b = 1/\sqrt{2}$, and $c = -1$. Equivalently, the density of $x = (x_1,x_2)^\top$ admits the closed-form expression
\[
  \pi(x_1,x_2)
  \;=\;
  \frac{1}{|b|}
  \,\varphi(x_1)\,
  \varphi\!\left(\frac{x_2 - a x_1^2 - c}{b}\right),
\]
where $\varphi$ denotes the standard normal density on $\mathbb{R}$. This construction yields a curved, banana-shaped distribution that is unimodal in radius but exhibits strong nonlinear dependence between $x_1$ and $x_2$. 

Figure~\ref{fig:3} assesses the quality of the learned transport for this nonlinear target. As shown in Figs.~\ref{fig:3}(a)–(b), the samples generated by the flow-matching model closely reproduce the characteristic banana-shaped support of the true distribution, including its curvature and non-Gaussian tails. Figs.~\ref{fig:3}(c)–(d) compare the analytical banana density with the density estimated from the learned flow. The two contour plots nearly coincide over the region of interest, indicating that the model provides a faithful approximation of $\pi$ both at the level of samples and in terms of the underlying density.

We then use the learned flow to study the impact of flow-based transport and RQMC sampling on the accuracy of moment estimation for this banana-shaped target. The resulting log-RMSE curves in Fig.~\ref{fig:4} shows that the two Monte Carlo-based estimators (FM-MC and FM-ISMC) exhibit the expected $O(N^{-1/2})$ convergence. The FM-QMC estimator initially attains a noticeably higher convergence rate,  reflecting the variance reduction provided by RQMC; however, its error eventually saturates as $N$ increases, due to the residual approximation error in the learned transport map, which induces a non-negligible bias. In contrast, FM-ISQMC achieves a convergence rate that remains significantly better than $O(N^{-1/2})$ over the entire range of sample sizes considered, maintaining a steadily decreasing log-RMSE. This behavior demonstrates that, combining flow-based importance sampling with RQMC can effectively leverage both variance reduction and bias correction to yield highly efficient moment estimates.

\begin{figure}[!htbp]
  \centering
  \includegraphics[width=0.75\textwidth]{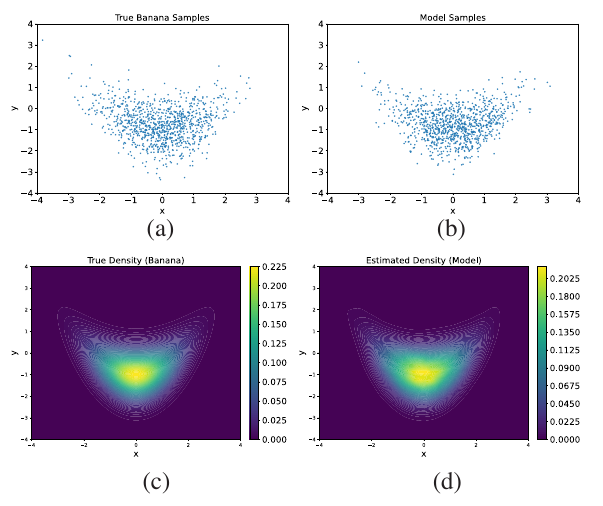}
  \caption{(a) Reference samples from the 2D Banana distribution. (b) Samples generated by the flow-matching model. (c) True density of the 2D Banana distribution. (d) Density estimated by the flow-matching model.}
  \label{fig:3}
\end{figure}

\begin{figure}[!htbp]
  \centering
  \includegraphics[width=0.75\textwidth]{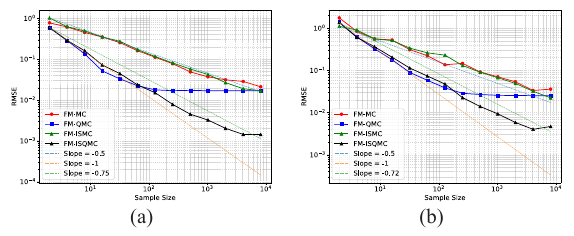}
  \caption{Log-RMSEs for estimating (a) the first and (b) the second moment of the 2D Banana distribution. Each RMSE is computed over 10 independent repetitions.}
  \label{fig:4}
\end{figure}

\section{Conclusions}
\label{sec:conclusions}

In this study, we have investigated how Flow Matching models can be combined with randomized quasi–Monte Carlo (RQMC) methods to construct efficient transport maps for high-dimensional integration. Working in a general importance sampling framework, we analyzed estimators based on a diffeomorphic transport map $\tau$ from the unit cube to a proposal density $q_\tau$ and showed that, under appropriate growth and regularity assumptions on the integrand, the target density, and the derivatives of $\tau$, the resulting RQMC estimators satisfy boundary growth condition. As a consequence, the root-mean-square error (RMSE) of the corresponding RQMC importance sampling estimator achieves the near-optimal convergence rate $O(N^{-1+\varepsilon})$ for any $\varepsilon>0$. We further verified that these assumptions are fulfilled for transport maps obtained from flow matching with a logistic base distribution and an Euler discretization of the probability–flow ODE, provided the learned velocity field has uniformly bounded derivatives of sufficient order.

Our theoretical analysis is complemented by a set of numerical experiments on challenging target distributions, including a 2D Gaussian mixture, a 2D banana-shaped distribution, and a 30D Gaussian mixture. The experiments consistently show that plain Flow Matcing Monte Carlo and RQMC estimators (FM–MC and FM–QMC) are limited by the approximation bias of the learned transport: the Monte Carlo variants exhibit the expected $O(N^{-1/2})$ convergence, while FM–QMC initially benefits from the enhanced variance reduction of RQMC but eventually saturates as $N$ increases. In contrast, our FM–ISQMC, attain convergence rates that are substantially better than $O(N^{-1/2})$ and maintain a steadily decreasing log-RMSE over the entire range of sample sizes considered, thereby demonstrating that combining Flow matching models and  importance sampling-based RQMC can effectively harness both variance reduction and bias correction in practice.

Our theoretical analysis assumes a fixed-step Euler discretization and a uniformly bounded velocity field. In practice, adaptive ODE solvers are often used, and optimal velocity fields may exhibit singularities at the boundaries \cite{gao2024convergence}. Bridging this gap between theory and practice is an important direction for future research. Moreover, a more rigorous tracking of the constants in our derivative growth bounds, particularly their dependence on dimension and number of steps, would strengthen the theoretical foundation. We will consider these aspects in future work.

%% The Appendices part is started with the command \appendix;
%% appendix sections are then done as normal sections
%% \appendix

%% \section{}
%% \label{}

%% References
%%
%% Following citation commands can be used in the body text:
%% Usage of \cite is as follows:
%%   \cite{key}         ==>>  [#]
%%   \cite[chap. 2]{key} ==>> [#, chap. 2]
%%

%% References with BibTeX database:

\bibliographystyle{elsarticle-num}
\bibliography{qmc}

%% Authors are advised to use a BibTeX database file for their reference list.
%% The provided style file elsarticle-num.bst formats references in the required Procedia style

%% For references without a BibTeX database:

% \begin{thebibliography}{00}

%% \bibitem must have the following form:
%%   \bibitem{key}...
%%

% \bibitem{}

% \end{thebibliography}

\end{document}